\newlist{condition}{enumerate}{10}
\setlist[condition]{label*=({A}\arabic*)}
\crefname{conditioni}{condition}{conditions}
\Crefname{conditioni}{Condition}{Conditions}
\newlist{condition2}{enumerate}{10}
\setlist[condition2]{label*=(\arabic{section}.\arabic{subsection}.\roman*)}
\crefname{condition2i}{condition}{conditions}
\Crefname{condition2i}{Condition}{Conditions}
\newlist{condition3}{enumerate}{10}
\setlist[condition3]{label*=({B}\arabic*)}
\crefname{condition3i}{condition}{conditions}
\Crefname{condition3i}{Condition}{Conditions}
\colorlet{shadecolor}{orange!20}
\newtcolorbox{mybox}[1]{colback=red!5!white,
    colframe=red!75!black,fonttitle=\bfseries,
title=#1}
\crefname{equation}{}{}
\crefname{page}{p.}{pp.}
\Crefname{subsection}{Subsection}{Subsections}
\crefname{subsection}{subsection}{subsections}
\numberwithin{equation}{section}
\theoremstyle{plain}
\newtheorem{theorem}{Theorem}[section]
\newtheorem{lemma}[theorem]{Lemma}
\newtheorem{proposition}[theorem]{Proposition}
\theoremstyle{definition}
\newtheorem{remark}{Remark}[section]
\numberwithin{equation}{section}
\newcommand{\R}{\mathbb{R}}  
\newcommand{\ML}{\MoveEqLeft}         
\newcommand{\ignore}[1]{}{}
\newcommand{\tcref}[1]{\texorpdfstring{\cref{#1}}{#1}}
\DeclareMathOperator{\E}{ \mathbb{E} }
\DeclareMathOperator{\IProb}{ \mathbb{P} }
\newcommand{\IP}{\IProb}
\newcommand{\conep}[3][]{\E_{#1}\left\{ #2 \,\middle\vert\, #3  \right\}}
\newcommand{\conepb}[3][]{\E_{#1}\bigl\{ #2 \bigm\vert #3  \bigr\}}
\newcommand{\conepbc}[3][]{\E_{#1}\bigl\{ #2 \bigm\vert #3  \bigr\}}
\newcommand{\dd}{\mathop{}\!\mathrm{d}}
\renewcommand{\leq}{\leqslant}
\renewcommand{\geq}{\geqslant}
\newenvironment{equ}
{\begin{equation} \begin{aligned}}
{\end{aligned} \end{equation}}
\newenvironment{equ*}
{\begin{equation*} \begin{aligned}}
{\end{aligned} \end{equation*}}
\def\cite{\citet*}
\def\ref{\cref}
\newcommand{\refcheckize}[1]{%
  \expandafter\let\csname @@\string#1\endcsname#1%
  \expandafter\DeclareRobustCommand\csname relax\string#1\endcsname[1]{%
    \csname @@\string#1\endcsname{##1}\@for\@temp:=##1\do{\wrtusdrf{\@temp}\wrtusdrf{{\@temp}}}}%
  \expandafter\let\expandafter#1\csname relax\string#1\endcsname
}
\newcommand{\refcheckizetwo}[1]{%
  \expandafter\let\csname @@\string#1\endcsname#1%
  \expandafter\DeclareRobustCommand\csname relax\string#1\endcsname[2]{%
    \csname @@\string#1\endcsname{##1}{##2}\wrtusdrf{##1}\wrtusdrf{{##1}}\wrtusdrf{##2}\wrtusdrf{{##2}}}%
  \expandafter\let\expandafter#1\csname relax\string#1\endcsname
}
\newcommand{\vertiii}[1]{{\left\vert\kern-0.25ex\left\vert\kern-0.25ex\left\vert #1 \right\vert\kern-0.25ex\right\vert\kern-0.25ex\right\vert}}
\renewcommand{\tilde}{\widetilde}
\renewcommand{\ignore}[1]{}{}
\let\emptyset\varnothing    
\newcommand{\ex}[1]{e^{#1}}
\def\ML{\MoveEqLeft}
\newlist{con1}{enumerate}{10}
\setlist[con1]{label*=\normalfont{({LD}\arabic*)},ref=\normalfont{({LD}\arabic*)}}
\crefname{con1i}{condition}{conditions}
\Crefname{con1i}{Condition}{Conditions}
\newlist{con2}{enumerate}{10}
\setlist[con2]{label*=\normalfont{({A}\arabic*)},ref=\normalfont{({A}\arabic*)}}
\crefname{con2i}{condition}{conditions}
\Crefname{con2i}{Condition}{Conditions}
\newlist{con4}{enumerate}{10}
\setlist[con4]{label*=\normalfont{({B}\arabic*)},ref=\normalfont{({B}\arabic*)}}
\crefname{con4i}{condition}{conditions}
\Crefname{con4i}{Condition}{Conditions}
\newlist{con3}{enumerate}{10}
\setlist[con3]{label*=\normalfont{({LD}\arabic*)},ref=\normalfont{({LD}\arabic*)}}
\crefname{con3i}{condition}{conditions}
\Crefname{con3i}{Condition}{Conditions}
\renewcommand{\cite}{\citet*}
\begin{document}
\begin{frontmatter}
\title{Cram\'er-type moderate deviation of normal approximation for unbounded exchangeable pairs}
\runtitle{Cram\'er-type moderate deviation for exchangeable pairs}

\begin{aug}
\author[C]{\fnms{Zhuo-Song} \snm{Zhang}\ead[label=e3]{zszhang.stat@gmail.com}}
\runauthor{Z.-S.~Zhang}
\address[C]{
Department of Statistics and Applied Probability, 
		 National University of Singapore, 
		 Singapore 117546. \printead{e3}
}
\end{aug}



\begin{abstract}
    In Stein's method, the exchangeable pair approach is commonly used to estimate the approximation errors in normal approximation. 
    In this paper, we  establish a  Cram\'er-type moderate deviation theorem of normal approximation for unbounded exchangeable pairs. 
    As applications, Cram\'er-type moderate deviation theorems for the sums of local statistics and  general Curie--Weiss model are obtained. 
\end{abstract}
\begin{keyword}[class=MSC]
\kwd[Primary ]{60F10}
\kwd[; secondary ]{60G60}
\end{keyword} 

\begin{keyword}
	\kwd{Stein's method}
	\kwd{exchangeable pair approach}
	\kwd{Cram\'er-type moderate deviation}
	\kwd{sums of local statistics}
	\kwd{general Curie--Weiss model}
\end{keyword}
\end{frontmatter}
\maketitle

\section{Introduction}
\newcommand{\cJ}{\mathcal{J}}
\newcommand{\cA}{\mathcal{A}}
\newcommand{\cB}{\mathcal{B}}
\newcommand{\cC}{\mathcal{C}}
\newcommand{\bX}{{X}}
\newcommand{\card}[1]{\left\vert #1 \right\vert}

\newcommand{\red}{\color{red}} 

The {exchangeable pair approach} of Stein's method is commonly used to estimate the convergence rates for distributional approximation. Using exchangeable pair
approach, 
\cite{Ch11N} and \cite{Sh16I} provided a concrete tool to identify the limiting distribution
of the target random variable as well as the $L_{1}$ bound of the approximation. 
 Recently,  \cite{Sh19B} obtained a Berry--Esseen-type bound 
of normal and nonnormal approximation for unbounded exchangeable pairs. 
Specifically, let $W$ be the random variable of interest, and we say $(W, W')$ an  \emph{exchangeable pair} if $(W, W') \stackrel{d.}{=} (W', W)$.  
Let  $\Delta = W - W'$.  It is often to assume that (see, e.g., \cite{Ri97O}) there exists a constant $\lambda > 0$ and  a random variable $R$ such that 
\begin{equ}
    \label{eq:cex1} 
    \conep{\Delta}{W} = \lambda ( W + R). 
\end{equ}
Under condition \cref{eq:cex1}, \cite{Sh19B} proved the following Berry--Esseen-type bound
\begin{equ}
    \label{eq:cex2} 
    \sup_{z \in \mathbb{R}} \bigl\lvert \IP(W \leq z) - \Phi(z) \bigr\rvert 
    & \leq \E \biggl\lvert 1 - \frac{1}{2\lambda} \conepb{\Delta^2}{W} \biggr\rvert  + \frac{1}{\lambda} \E\bigl\lvert \conep{\Delta^{*}\Delta}{W} \bigr\rvert + \E |R|, 
\end{equ}
where $\Phi(z)$ is the standard normal distribution function and where $\Delta^{*}:= \Delta^{*}(W, W')$ is any random variable satisfying that $\Delta^{*} (W, W') = \Delta^{*}(W', W)$ and $\Delta^{*} \geq |\Delta|$. 
We refer to \cite{St86A,Ri97O,Ch05Ea,Cha08a} and \cite{Mec09} for other related results of $L_1$ bound and Berry--Esseen bound for the exchangeable pair approach.

While the $L_1$ bound and Berry--Esseen-type bound  
describe the absolute error for distributional approximations, 
the Cram\'er-type moderate deviation reflects the relative error in convergences in distribution. 
More precisely, let $\{Y_n, n \geq 1\}$ be a sequence of random variables that converge 
to $Y$ in distribution, the Cram\'er-type moderate deviation is  
\begin{align*}
    \frac{\IP(Y_n > x)}{ \IP(Y > x) } = 1 + \, \text{error term} \, \to 1 
\end{align*}
for $0 \leq x \leq a_n$, where $a_n \to \infty$ as $n \to \infty$. 
Specially, let $X_1, \dots, X_n$ be independent and identically distributed (i.i.d.) random variables  satisfying that $\E X_1 = 0$, $\E X_1^2 = 1$ and $\E \ex{ t_0 {|X_1|}} < \infty$ for some $t_0 > 0$, and put $W_n = n^{-1/2} (X_1 + \dots  + X_n)$. Then,  
\begin{equ}
    \label{eq:pe75} 
    \frac{\IP(W_n > x)}{ 1 - \Phi(x) } = 1 + O(1) n^{-1/2} (1 + x^3), 
\end{equ}
for $0 \leq x \leq n^{1/6}$. 
The range $0 \leq x \leq n^{1/6}$ and the order of the error term
$n^{-1/2}(1 + x^3)$ are optimal for i.i.d.\  random variables. 
We refer to Chapter 8 of \cite{Pe75S} for details. 

The proof of Cram\'er-type moderate deviation theorems for independent random variables is based on the conjugate method and the Fourier transform. However, when dealing with dependent random variables, it is more common to use Stein's method to estimate distributional
approximation errors.  
Since introduced by \cite{St72B} in 1972, Stein's method has been deeply developed in recent years, and shows its 
importance and power in estimating the approximation errors of normal and nonnormal approximation. 
We refer to \cite{Ch11N0} and \cite{Ch14S} for more details. 
Recent years have seen a rapid development of applying Stein's method to prove moderate deviation results.  
For instance, using Stein's method, \cite{Ra07C} proved  the moderate deviation under certain local dependence structures. 
In the context of Poisson approximation, \cite{Ba92P,Ch92S} and \cite{Ba95P} applied Stein's method to prove moderate deviation results for sums of independent indicators, whereas
\cite{Ch13M0} studied sums of dependent indicators. 
Moreover, \cite{Ch13S}, \cite{Sh18C} and \cite{Fa19S} obtained the general Cram\'er-type moderate 
deviation results of normal and nonnormal approximation for dependent 
random variables whose dependence structure is defined in terms of 
a Stein identity under a boundedness assumption on $|\Delta|$.  

However, in practice, it may not be easy to check the condition \cref{eq:cex1} in general, and the boundedness assumption on $\lvert \Delta \rvert$ is also too strict in applications.  
In this paper, our aim is to apply Stein's method and the exchangeable pair approach to prove a 
Cram\'er-type moderate deviation result without assuming that $|\Delta|$ is bounded.
The results are then applied to sums of local statistics and the general Curie--Weiss model to obtain the Cram\'er-type moderate deviation results with optimal ranges and convergence rates. 

The rest of this paper is organized as follows. We present our main results in Section 2. 
In Section 3, 
we give some applications of our main result.  
The proof of Theorem 2.1 are put in Section 4.  
The proofs of other results are  
postponed to Section 5. 

\section{Main results} 

Let $\bX$ be a random variable valued on a measurable space $\mathscr{X}$, and let $W = \varphi(\bX) \in \mathbb{R}$ be an $\R$-valued random variable of interest. 
We consider the following condition: 
\begin{enumerate}
    \item [(D1)] 
		Let $(\bX, \bX')$ be an exchangeable pair.  Assume that there exists $D \coloneqq \Psi(\bX ,\bX') $, where $\Psi : \mathscr{X} \times \mathscr{X} \to \R$ is an antisymmetric function, satisfying that $\conep{D}{\bX} = \lambda (W + R)$ for some constant $\lambda>0$ and
		some random variable $R \coloneqq
		r(X)$.
\end{enumerate}

\begin{remark}
	The operator of antisymmetric functions was introduced by \cite{Hol04S}, and the condition (D1) has been  considered by \cite{Ch07S}, who applied Stein's method to prove concentration inequalities. 
	The condition \textup{(D1)}  is a natural generalization of \cref{eq:cex1}. Specially, if \cref{eq:cex1} is satisfied, we can simply choose $D = \Delta$. 

	Under the condition \textup{(D1)}, by antysymmetry, it follows that $\E \bigl\{D \bigl(f (W) + f(W')\bigr)\bigr\} = 0$ for any absolutely continuous function $f : \R \to \R$ satisfying that $\E \lvert D f(W) \rvert < \infty$. 
    A direct rearranging yields  
    \begin{align*}
        0 & = \E \bigl\{D \bigl(f (W) + f(W')\bigr)\bigr\}\\
		  & = 2 \E \{D f(W)\} -  \E \bigl\{D \bigl( f(W) - f(W') \bigr)\bigr\} \\
          & = 2 \lambda \E \{(W + R) f(W)\} -  \E \biggl\{D \int_{-\Delta}^0 f'(W + u) \dd u\biggr\}.  
    \end{align*}
    Then, 
    \begin{equ}
        \E \bigl\{ W f(W)  \bigr\} = \frac{1}{2\lambda} \E \biggl\{D \int_{-\Delta}^0 f'(W + u) \dd u\biggr\} - \E \bigl\{ R f(W) \bigr\}.
        \label{eq:ewfw}
    \end{equ}
\end{remark}

Our main result \cref{thm:2.1} provides a Cram\'er-type moderate deviation theorem 
under the condition \textup{(D1)} without the assumption that $|\Delta|$ is bounded: 

\begin{theorem}
    \label{thm:2.1}    
	Let  $(X, X')$ be an exchangeable pair satisfying the condition \textup{(D1)}, let $W = \varphi(X)$, $W' = \varphi(X')$
    and $\Delta = W - W'$. 
	Let $D^{*} \coloneqq D^{*}({X}, {X}')$ be any random variable such that $D^{*} ({X}, {X}') = D^{*} ({X}', {X})$ and $D^{*} \geq |D|$.      	      
    Assume  that there exists a constant $\tau > 0 $ such that 
\begin{con2}
\item \label{c:a0} 
    \begin{math}
		\E \bigl\{(1 + |D|)\ex{t W}\bigr\} < \infty, 
    \end{math} 
\item \label{c:a1}
    \begin{math}
        \E\bigl\{\bigl\lvert 1 - \frac{1}{2\lambda} \conep{D\Delta}{\bX}\bigr\rvert \ex{tW}\bigr\} \leq \delta_1(t) \E \ex{tW}, 
    \end{math}
\item \label{c:a2}
    \begin{math}
        \E\bigl\{\bigl\lvert \frac{1}{2 \lambda} \conep{D^{*}\Delta}{\bX}\bigr\rvert \ex{tW}\bigr\} \leq \delta_2(t) \E \ex{tW}, 
    \end{math}
    and 
\item \label{c:a3}
    \begin{math}
        \E\bigl\{|R| \ex{tW}\bigr\} \leq \delta_3(t) \E \ex{tW}.  
    \end{math}
\end{con2}
where for each $j = 1, 2, 3$, the function $\delta_j(\cdot)$ is increasing and satisfies that $\delta_j(\tau) < \infty$.  
    For $\theta > 0$, let 
    \begin{math}
        \tau_0(\theta)  := \max \{ 0 \leq t \leq \tau : t^2 \bigl(\delta_1(t) + \delta_2(t)\bigr)/2 + t \delta_3(t) \leq \theta  \}. 
    \end{math}
    Then, for any $\theta > 0$,  
    \begin{equ}
        \label{eq:lemexch}
        \biggl\lvert \frac{\IP (W > z)}{ 1 - \Phi(z)} - 1 \biggr\rvert  & \leq 
        20 \ex{\theta}   \bigl( (1 + z^2) \bigl( \delta_1 (z) + \delta_2(z)\bigr) + (1 + z) \delta_3(z) \bigr), 
    \end{equ}
    provided that $0 \leq z \leq \tau_0(\theta) $. 
\end{theorem}

\begin{remark}
	Recently, \cite{Zh21B} proved the following Berry--Esseen bound under the condition (D1): 
	\begin{equ}
    \sup_{z \in \mathbb{R}} \bigl\lvert \IP(W \leq z) - \Phi(z) \bigr\rvert 
    & \leq \E \biggl\lvert 1 - \frac{1}{2\lambda} \conepb{ D \Delta}{X} \biggr\rvert  + \frac{1}{\lambda} \E\bigl\lvert \conep{D^{*}\Delta}{X} \bigr\rvert + \E |R|. 	
        \label{eq-r2.2}
	\end{equ}
	The expectation terms in \cref{c:a1,c:a2,c:a3} can be understood as a change of measure of those on the right hand side of \cref{eq-r2.2}. To see this, for $t \geq 0$, let $Y_t$ be a random variable having the distribution 
	\begin{align*}
		\IP ( Y_t \in A ) = \frac{ \E \bigl\{e^{ t \varphi(X) } \mathbf{1}_{ \{X \in A \} } \bigr\}
		}{\E e^{t \varphi(X)}}.
	\end{align*}
	Let $g_1$ and $g_2$ be functions defined as 
	\begin{align*}
		g_1(x) = 1 - \frac{1}{2\lambda} \E \{ D \Delta \vert X = x \}, \quad 
		g_2(x) = \frac{1}{2\lambda} \E \{ D^* \Delta \vert X = x \},
	\end{align*}
	and recall that $R = r(X)$ as given in (D1). 
	Then, \cref{c:a1,c:a2,c:a3} can be replaced by 
	\begin{align*}
		\E \lvert g_1(Y_t) \rvert \leq \delta_1(t), \quad 
		\E \lvert g_2(Y_t) \rvert \leq \delta_2(t),	\quad  
		\E \lvert r(Y_t) \rvert \leq \delta_3(t).
	\end{align*}

\end{remark}

\section{Applications}
\subsection{Sums of local statistics}
Let $\mathcal{J}$ be an index set and let $X = \{ X_{\alpha} : \alpha \in \mathcal{J} \}$ be a field of independent random variables where $X_{\alpha}$ is valued on a measurable space $\mathcal{X}$. For any subset $J \subset \mathcal{J}$, denote $X_J = \{ X_{\alpha} : {\alpha} \in J \}$.
Let $n \geq 1$ and let $[n] = \{ 1, \dots, n \}$. For each $i \in [n]$, let 
\begin{math}
	\xi_i = f_i (X_{J_i}),
\end{math}
where $J_i \subset \mathcal{J}$ and $f_i : \mathcal{X}^{ \lvert J_i \rvert } \to \R$,  satisfying that $\E \xi_i = 0$ for each $i \in [n]$ and $\sum_{i = 1}^n \E \xi_i^2 = 1$. 
Let $N_{\alpha} = \{ i \in [n]: {\alpha} \in J_i \}$ for ${\alpha} \in \mathcal{J}$. For any set $A$, let $\lvert A \rvert$ be its cardinality. 
We consider the sum of local statistic $W = \sum_{i = 1}^n \xi_i$.

\begin{theorem}	
	\label{thm:local}
Assume that 
for each $i \in [n]$, 
\begin{align}
	\lvert \xi_i \rvert \leq \sum_{{\alpha} \in J_i} g_{i{\alpha}} (X_{\alpha}), \label{eq:c31}
\end{align}
where $\{ g_{i{\alpha}} : i \in [n], {\alpha} \in J_i \}$ is an array of nonnegative functions, 
and there exist $a > 0$ and $b \geq 1$ such that 
\begin{align}
	\max_{i \in [n]} \max_{{\alpha} \in J_i} \E e^{ a g_{i{\alpha}}(X_{\alpha}) } \leq b. 
	\label{eq:c32}
\end{align}
Moreover, we further assume that there exist $s \geq 1$ and $d \geq 1$ such that 
\begin{equ}
	 \max_{i \in [n]} \lvert J_i \rvert \leq s, \quad \max_{{\alpha} \in \mathcal{J}} \lvert N_{\alpha} \rvert \leq d. 
    \label{eq:c33}
\end{equ}
Let $\delta = n^{1/2} a^{-2} d^{3/2} s^{7/2} b^{1/2} (1 + n^{1/2} a^{-1} d^{1/2} s^{3/2} b^{1/2}).$
Then, we have 
\begin{equ}
	\biggl\lvert \frac{ \IP(W \geq z) }{1 - \Phi(z)} - 1 \biggr\rvert 
	& \leq C_0 \delta (1 + z^3) 
    \label{eq:t3.1-a}
\end{equ}
for $0 \leq z \leq \min\{ a/(8dsb^2), \delta^{-1/3}\}$, 
where $C_0$ is an absolute positive constant. 
\end{theorem}
\begin{remark}
	We make some remarks on the conditions in \cref{thm:local}.
	The condition \cref{eq:c31} can be satisfied for a wide class of functions. For example, if $f_i(x_{J_i}) = \prod_{j\in J_i} x_{j}$, then \cref{eq:c31} is satisfied with $g_{ij} = \lvert x_j \rvert^{ \lvert J_i \rvert }/ \lvert J_i \rvert$ by Young's
	inequality.  Condition \cref{eq:c32} is also known as the \emph{Cram\'er's condition}, which is commonly taken when proving Cram\'er-type moderate deviation theorems. 
	The condition \cref{eq:c33} is also assumed by \cite{Fa19S}. We remark that both $d$ and $s$ in \cref{eq:c33} can also depend on $n$. 
\end{remark}
\begin{remark}
	In order to illustrate the range and convergence rate \cref{eq:t3.1-a} are correct, consider sums of i.i.d.\  random variables. 
	Let $X_1, \dots, X_n$ be i.i.d.\  random variables satisfying that $\E X_i = 0, \E X_i^2 = 1$ and $\E e^{t_0 \lvert X_i \rvert} \leq b$
	for some $t_0 > 0$ and $b \geq 1$. Let $\xi_i = X_i / \sqrt{n}$ for $i \in [n]$ and $W = \sum_{i = 1}^n \xi_i$. Then, we have \cref{eq:c31,eq:c32,eq:c33} are satisfied with
	\begin{align*}
		a = t_0 \sqrt{n}, \quad s = d = 1.  
	\end{align*}
	Then $\delta = n^{-1/2} t_0^{-2} b^{1/2}(1 + t_0^{-1} b^{1/2}) \leq n^{-1/2}(1 + t_0^{-3} b)$.
	Therefore, \cref{thm:local} reduces to 
	\begin{equation*}
		\biggl\lvert \frac{ \IP(W \geq z) }{ 1 - \Phi(z) } - 1 \biggr\rvert
		\leq C_0 n^{-1/2} (1 + t_0^{-3}b) (1 + z^3).	
	\end{equation*}
	$\text{for $0 \leq z \leq \min \{ t_0 b^{-2} n^{1/2}, t_0^{2/3} b^{-1/6} n^{1/6} \} $}$, which is as same as the optimal result \cref{eq:pe75}.
\end{remark}	
\begin{remark}
	Recently, \cite{Fa19S} proved a higher-order approximation relative error for the cases where $\xi_i$'s are bounded. Although we only consider low-order approximation in this subsection, the boundedness assumption is relaxed in \cref{thm:local}. It would be
	interesting if one could prove a higher-order approximation error bound for unbounded cases. 
\end{remark}

\subsection{The general Curie--Weiss model}
The Curie--Weiss model of ferromagnetic interaction has been extensively 
studied in the past decades. Let $\rho$ be a probability measure on $\mathbb{R}$ 
satisfying that 
\begin{equ}
    \label{eq:cw-c1} 
    \int_{-\infty}^{\infty} x \dd \rho(x) = 0 \quad \text{and}\quad \int_{-\infty}^{\infty} x^2 \dd \rho(x) = 1.
\end{equ}
The {\it general Curie--Weiss model} ${\rm CW}(\rho)$ at inverse temperature $\beta$ is defined as the array of spin random variables 
${X} = (X_1, \dots, X_n)$ with joint distribution 
\begin{equ}
    \label{eq:cw-c2} 
    \dd P_{n, \beta} ({x}) = Z_n^{-1} \exp \biggl( \frac{\beta}{2n} (x_1 + \dots + x_n)^2 \biggr) \prod_{i = 1}^n \dd \rho(x_i)
\end{equ}
for ${x} = (x_1, \dots, x_n) \in \mathbb{R}^n$ where $Z_n$ is the normalizing constant
\begin{equ*}
    Z_n = \int \exp \biggl( \frac{\beta}{2n} (x_1 + \dots + x_n)^2 \biggr) \prod_{i = 1}^n \dd \rho(x_i) . 
\end{equ*}
The Curie--Weiss model ${\rm CW}(\rho)$ is called ``at the critical temperature'' if $\beta = 1$. 
The total magnetization is defined by 
\begin{math}
    S = \sum_{i = 1}^{n} X_i.
\end{math}
The asymptotic behavior of the $S$ is well studied by \cite{El78L,El78T}. 
Stein's method can be applied to estimate the convergence rate, for example, using the exchangeable pair approach, \cite{Ei10S,Ch11N} obtained Berry--Esseen bounds 
for Curie--Weiss model with boundedly supported $\rho$, and \cite{Sh19B} proved the Berry--Esseen bound for the general Curie--Weiss model with unboundedly supported $\rho$. We refer to \cite{Ki13A,Ki16A,Ei15O,Ei16R} for other normal approximation results of
mean field models by Stein's method. 
Moreover, \cite{Ch13S} and \cite{Sh18C} obtained the Cram\'er-type moderate deviation results for the cases where $\rho$ has a finite support. 

In this subsection, we establish the Cram\'er-type moderate deviation result for the general Curie--Weiss model at noncritical temperature with infinitely supported probability measure $\rho$. 
Let $(X_1, \dots, X_n)$ follow the joint distribution \cref{eq:cw-c2} with $0 < \beta < 1$ and $\rho$ satisfying \cref{eq:cw-c1} and 
\begin{equ}
    \label{eq:cw-c4} 
    \int_{-\infty}^{\infty} \ex{t x} \dd \rho(x) \leq \ex{t^2/2} \quad \text{for all } t \in \mathbb{R}.
\end{equ}
Let $W = n^{-1/2}(1 - \beta)^{1/2} \sum_{i = 1}^n X_i$ be the standardized version of the total magnetization. We have the following theorem. 

\begin{theorem}
        \label{thm:cw}
		Under \cref{eq:cw-c1,eq:cw-c4},  
        we have 
        \begin{equ}
            \label{eq:tcw-a} 
            \biggl\lvert \frac{\IP(W> z)}{1 - \Phi(z)} - 1 \biggr\rvert \leq C n^{-1/2} (1 + z^3)
        \quad \text{ for $0 \leq z \leq \sqrt{n}$}.
        \end{equ}
\end{theorem}

The Berry--Esseen bound was obtained by \cite{Sh19B} with the convergence rate $O(n^{-1/2})$. 
For the simplest Curie--Weiss model, where the magnetization is valued on $\left\{ -1,1 \right\}$ with equal probability, 
\cite{Ch13S} proved the same convergence rate as \cref{eq:tcw-a} with convergence range $[0, n^{1/6}]$. However, \cref{thm:cw} provides a wider 
convergence range.

\section{Proof of main result}
In this section, we give the proof of \cref{thm:2.1}. In \cref{sub:4.1}, we prove a more general moderate deviation result, which might be of independent interest. In \cref{sub:4.2}, we prove a bound for $\E e^{tW}$ using Stein's method. Our main result \cref{thm:2.1} follows from
a combination of \cref{thm:exch0,prop:5.3}, and the details of the proof are put in \cref{sub:4.3}.

\subsection{A general moderate deviation result}%
\label{sub:4.1}

\def\tdelta{\kappa}
\begin{proposition}
    \label{thm:exch0}
	Let  $(X, X')$, $W$, $W'$, $D, \Delta$ and $D^*$ be defined as in \cref{thm:2.1}.      
    Assume  that there exists a constant $\tau_0 > 0 $  such that for all $0 \leq t \leq \tau_0$, 
\begin{con4}
\item \label{c:c1}
    \begin{math}
        \E\bigl\{\bigl\lvert 1 - \frac{1}{2\lambda} \conep{D\Delta}{\bX}\bigr\rvert \ex{tW}\bigr\} \leq \tdelta_1(t)\ex{t^2/2} , 
    \end{math}
\item \label{c:c2}
    \begin{math}
        \E\bigl\{\bigl\lvert \frac{1}{2 \lambda} \conep{D^{*}\Delta}{\bX}\bigr\rvert \ex{tW}\bigr\} \leq \tdelta_2(t)\ex{t^2/2}, 
    \end{math}
    and 
\item \label{c:c3}
    \begin{math}
        \E\bigl\{|R| \ex{tW}\bigr\} \leq \tdelta_3(t) \ex{t^2/2}.  
    \end{math}
\end{con4}
where $\tdelta_1(t), \tdelta_2(t)$ and $\tdelta_3(t)$ are nondecreasing functions satisfying that $\tdelta_j(\tau_0) < \infty$ for $j = 1, 2, 3$. 
Then,   
    \begin{equ}
        \label{eq:lemexch0}
        \biggl\lvert \frac{\IP (W > z)}{ 1 - \Phi(z)} - 1 \biggr\rvert  & \leq 
        20   \bigl( (1 + z^2) \bigl( \tdelta_1 (z) + \tdelta_2(z)\bigr) + (1 + z) \tdelta_3(z) \bigr), 
    \end{equ}
    provided that $0 \leq z \leq \tau_0 $. 
\end{proposition} 
We now make some remarks on \cref{thm:2.1} and \cref{thm:exch0}. In practice, if we know little about the explicit distributional information of the random variable $X$, it might not be easy to obtain the $e^{t^2/2}$ term directly when calculation the expectation terms on the left
hand of \cref{c:c1,c:c2,c:c3}. However, on the other hand, it is usually easier to obtain the self-bounding inequalities \cref{c:a1,c:a2,c:a3} only based on the structure of $W$. After that, one can apply Stein's method to prove the moment generating inequality $\E e^{tW} \leq C e^{t^2/2}$, which
further
implies \cref{c:c1,c:c2,c:c3}. We refer to
\cref{prop:5.3} (see below) for more details for the moment generating inequality. 

Before proving \cref{thm:exch0}, we first present some preliminary lemmas. 
In the proofs, we use the techniques in \cite[Lemmas 5.1--5.2]{Ch13S} and \cite[pp. 71--73]{Sh19B}. 

\begin{lemma}
    \label{lem:psi}
    Let $f$ be a nondecreasing function. Then, 
    \begin{equ}
        \biggl\lvert \E \biggl\{ D \int_{-\Delta}^0 \bigl(  f(W + u) - f(W) \bigr) \dd u  \biggr\}\biggr\rvert \leq \E \bigl\{ D^{*} \Delta f(W) \bigr\}, 
        \label{l4.1-aa}
    \end{equ}
    where $D^*$ is as defined in \cref{thm:exch0}. 
\end{lemma}

\begin{proof}
    [Proof of \cref{lem:psi}] 
\def\func{f}
\newcommand\IN[1]{\mathbf{1}_{ \{#1\} }}
	In this proof, we use the technique as in \cite{Sh19B}. 
    Since $\func(\cdot)$ is nondecreasing, it follows that
	$\Delta \bigl( \func(W) - \func(W') \bigr) \geq 0$
	and that 
    \begin{equ}
        0 & \geq \int_{-\Delta}^0 \bigl( \func(W + u) - \func(W) \bigr) \dd u \geq - \Delta \bigl( \func(W) - \func(W') \bigr). 
        \label{l4.1-01}
    \end{equ}
	Let $D^+ = D \IN{ D > 0 }$ and $D^- = -D\IN{D < 0}$. Then, it follows that $D = D^+ - D^-$ and $|D| = D^+ + D^-$. By \cref{l4.1-01}, we have 
	\begin{align*}
		D^+\int_{-\Delta}^0 \bigl( \func(W + u) - \func(W) \bigr) \dd u \leq 0, \quad 
		D^-\int_{-\Delta}^0 \bigl( \func(W + u) - \func(W) \bigr) \dd u \leq 0.
	\end{align*}
	Therefore, 
    \begin{align*}
		\E \biggl\{ D \int_{-\Delta}^0 \bigl( \func(W + u) - \func(W) \bigr) \dd u \biggr\}
		& = \E \biggl\{ (D^+ - D^-) \int_{-\Delta}^0 \bigl( \func(W + u) - \func(W) \bigr) \dd u \biggr\}\\
		& \geq \E \biggl\{ D^+ \int_{-\Delta}^0 \bigl( \func(W + u) - \func(W) \bigr) \dd u \biggr\}\\
		& \geq - \E \bigl\{ D^+ \Delta \bigl( f(W) - f(W') \bigr) \bigr\}.
    \end{align*}
	and similarly 
	\begin{align*}
		\E \biggl\{ D \int_{-\Delta}^0 \bigl( \func(W + u) - \func(W) \bigr) \dd u \biggr\}
		& \leq \E \bigl\{ D^- \Delta \bigl( f(W) - f(W') \bigr) \bigr\}.
	\end{align*}
    Recalling that $ W= \phi({X})$, $D = F({X}, {X}')$ is antisymmetric and $D^* = F^*({X}, {X}')$ is symmetric, as $({X}, {X}')$ is exchangeable,  we have 
    \begin{gather*}
        \E \bigl\{ D^+ \Delta \bigl\{ \func(W) - \func(W') \bigr\} \bigr\} = \E \bigl\{ D^- \Delta \bigl( \func(W) - \func(W') \bigr) \bigr\} \geq 0. 
    \end{gather*}
	Therefore, the LHS of \cref{l4.1-aa} is bounded by
	\begin{equ}
		\MoveEqLeft \frac{1}{2\lambda} \biggl\lvert \E \biggl\{ D \int_{-\Delta}^0 \bigl\{ \func(W + u)  - \func(W) \bigr\} \dd u\biggr\}\biggr\rvert \leq \frac{1}{2\lambda} \E \bigl\{ D^- \Delta \bigl( \func(W) - \func(W') \bigr)\bigr\}.
        \label{l4.1-02}
	\end{equ}
    In order to bound the RHS of \cref{l4.1-02}, note that $\E \bigl\{D^* \Delta \IN{ D = 0} \bigl( \func(W) - \func(W') \bigr)\bigr\} \geq 0$ and that $\E \bigl\{ D^* \IN{ D = 0 } \Delta \func(W) \bigr\} = - \E \bigl\{ D^* \IN{ D = 0 } \Delta \varphi(W') \bigr\}$,
	and then we have  
    \begin{equ}
        \E \{D^* \Delta \IN{ D = 0} \func(W)\} \geq 0. 
        \label{eq-l4.2-11}
    \end{equ}
	Moreover, note that $D^*$ is symmetric with respect to $X$ and $X'$, and by by exchangeability, 
	\begin{equ}
		\E \bigl\{D^*  \Delta \IN{ D < 0 } \func(W') \bigr\}
		= \E \bigl\{D^{*} \Delta \IN{ D > 0 }  \func(W)\bigr\}.
        \label{eq-l4.2-12}
	\end{equ}
    Therefore, by \cref{eq-l4.2-11,eq-l4.2-12}, 
    \begin{align*}
		\text{RHS of \cref{l4.1-02}}
        & \leq \frac{1}{2\lambda} \E \bigl\{D^* \IN{ D < 0 } \Delta \bigl( \func(W) - \func(W') \bigr) \bigr\}\\
        & = \frac{1}{2\lambda} \E \bigl\{D^{*} \Delta \bigl( \IN{ D > 0 } + \IN{ D <0 } \bigr) \func(W)\bigr\} \\
		& \leq \frac{1}{2\lambda} \E \{D^{*} \Delta \func(W)\}. 
	\end{align*}
	This completes the proof. 
\end{proof}

\begin{lemma}
    \label{lem:j2}
    Under the conditions of \cref{thm:exch0}, 
    we have for $0 \leq z \leq \tau_0 $, 
    \begin{align}
        \label{eq:lj1} 
		\E \left\{ \biggl\lvert 1 - \frac{1}{2\lambda} \conep{D \Delta}{W}  \biggr\rvert W \ex{W^2/2} \mathbf{1}_{ \left\{ 0 \leq W \leq z \right\} } \right\} & \leq 4   (1 + z^2) \tdelta_1(z) , 
        \\ 
        \label{eq:lj2} 
		\frac{1}{2\lambda} \E \left\{ \bigl\lvert \conep{D^{*} \Delta}{W}  \bigr\rvert W \ex{W^2/2} \mathbf{1}_{ \left\{ 0 \leq W \leq z \right\} } \right\} & \leq 4   (1 + z^2) \tdelta_2(z), 
        \\
		\E \left\{ |R| \ex{ W^2/2 } \mathbf{1}_{ \left\{ 0 \leq W \leq z \right\} } \right\} & \leq  2  (1 + z) \tdelta_3(z). 
        \label{eq:lj3} 
    \end{align}
\end{lemma}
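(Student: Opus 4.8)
The plan is to prove a single auxiliary estimate and then read off all three inequalities from it. For a nonnegative random variable $V$ satisfying $\E(V\ex{tW})\le\delta(t)\,\E\ex{tW}$ for $0\le t\le A$ with $\delta$ increasing, I claim that
\[
\E\bigl(V\ex{W^2/2}1_{\{0\le W\le z\}}\bigr)\le \ex{d_0}\,\delta(z)\,(1+z),\qquad 0\le z\le A_0(d_0).
\]
Granting this, \cref{eq:lj3} follows at once with $V=|R|$ and $\delta=\delta_3$, using \cref{c:a3}. For \cref{eq:lj1,eq:lj2} I would first pass from conditioning on $W$ to conditioning on $\bX$: since $\sigma(W)\subseteq\sigma(\bX)$, the tower property gives $1-\frac{1}{2\lambda}\conep{D\Delta}{W}=\conep{1-\frac{1}{2\lambda}\conep{D\Delta}{\bX}}{W}$, so conditional Jensen yields $\bigl\lvert 1-\frac{1}{2\lambda}\conep{D\Delta}{W}\bigr\rvert\le\conep{\bigl\lvert 1-\frac{1}{2\lambda}\conep{D\Delta}{\bX}\bigr\rvert}{W}$, and likewise for $\conep{D^{*}\Delta}{\cdot}$. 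Hence, for any nonnegative $W$-measurable weight, the $W$-conditional integrands are dominated in expectation by their $\bX$-conditional counterparts, to which \cref{c:a1,c:a2} apply. On the support $\{0\le W\le z\}$ one has $W\le z$, so the extra factor $W$ in \cref{eq:lj1,eq:lj2} costs only a factor $z$, and the auxiliary estimate (with $\delta=\delta_1$ or $\delta_2$) gives $z\,\ex{d_0}\delta(z)(1+z)$; the elementary bound $z(1+z)\le\frac{3}{2}(1+z^2)$ then produces the claimed constants.

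The device behind the auxiliary estimate is the Gaussian identity $\ex{W^2/2}=\frac{1}{\sqrt{2\pi}}\int_{-\infty}^{\infty}\ex{tW-t^2/2}\,\dd t$. Since every integrand is nonnegative, Tonelli's theorem gives
\[
\E\bigl(V\ex{W^2/2}1_{\{0\le W\le z\}}\bigr)=\frac{1}{\sqrt{2\pi}}\int_{-\infty}^{\infty}\ex{-t^2/2}\,\Psi(t)\,\dd t,\qquad \Psi(t):=\E\bigl(V\ex{tW}1_{\{0\le W\le z\}}\bigr).
\]
It then remains to bound $\Psi(t)$ on three ranges. For $t\le 0$ I use $\ex{tW}\le 1$ on $\{W\ge 0\}$, so $\Psi(t)\le\E(V1_{\{0\le W\le z\}})\le\delta(0)\le\delta(z)$. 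For $0\le t\le z$ I drop the indicator and combine the hypothesis with \cref{lem:etw}: $\Psi(t)\le\delta(t)\E\ex{tW}\le\delta(z)\ex{d_0}\ex{t^2/2}$. For $t\ge z$ I exploit the upper truncation, since on $\{W\le z\}$ one has $\ex{tW}\le\ex{zW}\ex{(t-z)z}$, whence $\Psi(t)\le\ex{(t-z)z}\Psi(z)\le\ex{(t-z)z}\delta(z)\ex{d_0}\ex{z^2/2}$, again using \cref{lem:etw} at the endpoint $t=z\le A_0(d_0)$.

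Finally I would integrate these three bounds against $\frac{1}{\sqrt{2\pi}}\ex{-t^2/2}$. The contribution of $t\le0$ is $\frac12\ex{d_0}\delta(z)$; the contribution of $0\le t\le z$ is $\frac{1}{\sqrt{2\pi}}\int_0^z\ex{d_0}\delta(z)\,\dd t=\frac{z}{\sqrt{2\pi}}\ex{d_0}\delta(z)$, because the factors $\ex{-t^2/2}$ and $\ex{t^2/2}$ cancel; and the contribution of $t\ge z$ is $\frac{1}{\sqrt{2\pi}}\ex{d_0}\delta(z)\int_z^{\infty}\ex{-(t-z)^2/2}\,\dd t=\frac12\ex{d_0}\delta(z)$, since the exponent $-t^2/2+(t-z)z+z^2/2$ equals exactly $-(t-z)^2/2$. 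Summing yields $\ex{d_0}\delta(z)\bigl(1+\frac{z}{\sqrt{2\pi}}\bigr)\le\ex{d_0}\delta(z)(1+z)$, which is the auxiliary estimate. The step I expect to be most delicate is the tail $t\ge z$: a naive bound such as $\ex{tW}\le\ex{tz}$ would leave an $\ex{z^2/2}$ factor and destroy the estimate, so it is essential to split off $\ex{zW}$, controlled by the moment generating function at $t=z$, and let the residual shifted Gaussian weight absorb the remainder exactly. This exact cancellation is precisely what converts the exponential weight $\ex{W^2/2}$ into the polynomial factors $(1+z)$ and $(1+z^2)$.
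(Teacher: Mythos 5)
Your proof is correct, and it takes a genuinely different route from the paper's. The paper partitions $\{0\le W\le z\}$ into unit intervals $\{j-1\le W<j\}$, bounds $W\ex{W^2/2}$ on each piece by $Cj\,\ex{-j^2/2}\ex{jW}$ (using that $w\mapsto w^2/2-jw$ is decreasing on $[j-1,j]$), applies \cref{c:a1,c:a2,c:a3} together with the moment generating function bound \cref{eq:ltwb} at the integer points $t=j$ and at $t=z$, and sums $\sum_{j\le z}j=O(z^2)$. You instead linearize the weight $\ex{W^2/2}$ all at once via the Gaussian identity $\ex{W^2/2}=(2\pi)^{-1/2}\int\ex{tW-t^2/2}\,\dd t$ and Tonelli, reduce everything to bounding $\Psi(t)=\E(V\ex{tW}1_{\{0\le W\le z\}})$ on the three ranges $t\le0$, $0\le t\le z$, $t\ge z$, and handle the extra factor $W$ in \cref{eq:lj1,eq:lj2} by the crude bound $W\le z$. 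Both arguments rest on exactly the same two inputs --- the hypotheses \cref{c:a1,c:a2,c:a3} and \cref{eq:ltwb} --- and your tail computation for $t\ge z$ (splitting $\ex{tW}\le\ex{zW}\ex{(t-z)z}$ so that the shifted Gaussian weight integrates to $1/2$) is the correct continuous analogue of the paper's use of the endpoint term $t=z$. Your version buys a single unified auxiliary estimate from which all three inequalities follow, slightly better constants ($\tfrac32$ in place of $6$), and --- a point the paper glosses over --- an explicit conditional-Jensen step reconciling the $\sigma(W)$-conditioning in the statement with the $\sigma(\bX)$-conditioning in \cref{c:a1,c:a2}. The paper's discrete partition buys nothing extra here beyond avoiding the integral representation; the two are essentially Abel-summation versus integration against the Gaussian kernel.
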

\begin{proof}
    [Proof of \cref{lem:j2}]
    We apply the idea of \cite[Lemma 5.2]{Ch13S} in this proof. 
    For $a \in \mathbb{R}_+$, denote $\lfloor a \rfloor = \max\{ n \in \mathbb{N} : n \leq a \}$. 
    By \cref{c:c1}, and recalling that the function $\tdelta_1(\cdot)$ is increasing, for any $0 \leq x \leq z \leq \tau_0 $, 
    \begin{equ}
        \ex{-x^2/2}\E \left\{ \biggl\lvert 1 - \frac{1}{2\lambda} \conep{D \Delta}{W}  \biggr\rvert  \ex{ x W } \right\} & \leq \tdelta_1(x) \leq  \tdelta_1 (z). 
        \label{eq-lj201}
    \end{equ}
    We have  
        \begin{align*}
        \text{RHS of \cref{eq:lj1}}
        & = \sum_{j = 1}^{ \lfloor {z} \rfloor } \E \left\{ \biggl\lvert 1 - \frac{1}{2\lambda} \conep{D \Delta}{W}  \biggr\rvert W \ex{W^2/2} \mathbf{1}_{ \left\{ j-1 \leq W < j \right\} } \right\} \\
        & \quad +  \E \left\{ \biggl\lvert 1 - \frac{1}{2\lambda} \conep{D \Delta}{W}  \biggr\rvert W \ex{W^2/2} \mathbf{1}_{ \left\{ \lfloor {z} \rfloor \leq W \leq z \right\} } \right\} \\
        & \leq \sum_{j = 1}^{\lfloor {z} \rfloor} j \ex{ (j - 1)^2 / 2 - j ( j - 1 ) }   \E \left\{ \biggl\lvert 1 - \frac{1}{2\lambda} \conep{D \Delta}{W}  \biggr\rvert  \ex{ j W } \mathbf{1}_{ \left\{ j-1 \leq W < j \right\} } \right\} \\
        & \quad +  z \ex{ \lfloor {z} \rfloor^2/2 - \lfloor {z} \rfloor z } \E \left\{ \biggl\lvert 1 - \frac{1}{2\lambda} \conep{D \Delta}{W}  \biggr\rvert \ex{ z W } \mathbf{1}_{ \left\{ \lfloor {z} \rfloor \leq W \leq z \right\} } \right\} \\
        & \leq 2 \sum_{j = 1}^{\lfloor {z} \rfloor} j  \ex{-j^2/2} \E \left\{ \biggl\lvert 1 - \frac{1}{2\lambda} \conep{D \Delta}{W}  \biggr\rvert  \ex{ j W } \mathbf{1}_{ \left\{ j-1 \leq W < j \right\} } \right\} \\
        & \quad +  2 z \ex{-z^2/2} \E \left\{ \biggl\lvert 1 - \frac{1}{2\lambda} \conep{D \Delta}{W}  \biggr\rvert \ex{ z W } \mathbf{1}_{ \left\{ \lfloor {z} \rfloor \leq W \leq z \right\} } \right\}\\
		& \leq 2 \tdelta_1 (z)\biggl( \sum_{j = 1}^{\lfloor {z} \rfloor} j + z \biggr) \leq 4   (1 + z^2) \tdelta_1(z) ,
    \end{align*}
	where we used \cref{eq-lj201} in the last line. 
    This proves \cref{eq:lj1}. The inequalities \cref{eq:lj2,eq:lj3} can be shown similarly. 
\end{proof}
Now we are ready to give the proof of \cref{thm:exch0}. In the proof, we combine the ideas in \cite[Section 4]{Sh19B} and \cite[Section 6]{Ch13S}.
\begin{proof}
    [Proof of \cref{thm:exch0}] 
    Let $z \geq 0$ be a fixed real number,  
    and let $f_z$ be the solution to the Stein equation 
    \begin{align}
        f'(w) - w f(w) = \mathbf{1}_{ \left\{ w \leq z \right\} } - \Phi(z), 
        \label{eq:stein}
    \end{align}
    where $\Phi(\cdot)$ is the standard normal distribution function. 
    It is well known that (see, e.g., \cite{Ch11N0}) $f_z$ is given by 
    \begin{equ}
        \label{eq:solu}
        f_z (w) = 
        \begin{dcases}
            \frac{\Phi(w) \bigl\{ 1 - \Phi(z) \bigr\}}{ p(w) }, & w \leq z, \\
            \frac{ \Phi(z) \bigl\{ 1 - \Phi(w) \bigr\} }{ p(w) }, & w > z, 
        \end{dcases}
    \end{equ}
    where $p(w) = (2\pi)^{-1/2} \ex{ - w^2 / 2 }$ is the standard normal density function.  
    
	By \cref{eq:stein} and applying \cref{eq:ewfw} by taking $f(w) = f_z(w)$ yields 
    \begin{equ}
        \label{eq:j123}
        \IP (W > z) - \bigl\{ 1 - \Phi(z) \bigr\} = \E \bigl\{ f_z'(W) - W f_z(W) \bigr\} 
         =  J_1 - J_2 + J_3, 
    \end{equ}
    where 
    \begin{align*}
        J_1 & = \E \biggl\{ f_z'(W) \left( 1 - \frac{1}{2\lambda} \conep{ D \Delta}{W} \right) \biggr\}, \\
        J_2 & = \frac{1}{2\lambda} \E \biggl\{ D \int_{-\Delta}^0 \left( f_z'(W + u) - f_z'(W)  \right) \dd u  \biggr\} , \\
        J_3 & = \E \bigl\{ R f_z(W) \bigr\} .
    \end{align*}

    Without loss of generality, we only consider $J_2$, because $J_1$ and 
    $J_3$ can be bounded similarly. 

    For $J_2,$ observe that $f_z'(w) = w f(w) - \mathbf{1}_{ \left\{ w > z \right\} } + \bigl\{ 1 - \Phi(z) \bigr\}$, and both $wf_z(w)$ and $\mathbf{1}_{ \left\{ w > z \right\} }$ are increasing functions (see, e.g. \cite[Lemma 2.3]{Ch11N0}), by \cref{lem:psi}, 
    \begin{equ}
        \label{eq:j2122} 
        |J_2| &\leq \frac{1}{2\lambda} \biggl\lvert \E \biggl[ D \int_{-\Delta}^0 \left\{ (W + u)f_z(W + u) - W f_z'(W)  \right\} \dd u  \biggr]\biggr\rvert \\
              & \quad +  \frac{1}{2\lambda} \biggl\lvert \E \biggl[ D \int_{-\Delta}^0 \left\{ \mathbf{1}_{ \left\{ W + u > z \right\}  } - \mathbf{1}_{ \left\{ W > z \right\} } \right\} \dd u  \biggr]\biggr\rvert \\
              & \leq \frac{1}{2\lambda} \E \bigl\lvert \conep{D^{*} \Delta}{W} \bigr\rvert \bigl( |W f_z(W)| + \mathbf{1}_{ \left\{ W > z \right\} } \bigr) = J_{21} + J_{22}, 
    \end{equ}
    where 
    \begin{align*}
		J_{21} & =  \frac{1}{2\lambda} \E \Bigl\{\bigl\lvert \conep{D^{*} \Delta}{W} \bigr\rvert \cdot \bigl\lvert W f_z(W) \bigr\rvert\Bigr\}, & 
        J_{22} & = \frac{1}{2\lambda} \E \Bigl\{\bigl\lvert \conep{D^{*} \Delta}{W} \bigr\rvert \mathbf{1}_{ \left\{ W > z \right\} }\Bigr\} . 
    \end{align*}   
    For any $w > 0$, it is well known that
    \begin{math}
        ( 1 - \Phi(w) )/ p(w)  \leq \min \left\{ 1/w, \sqrt{2\pi}/2 \right\}. 
    \end{math}
    Then,  for $w > z$,  
    \begin{align}
        \label{eq:bound1}
        \bigl\lvert f_z(w) \bigr\rvert \leq \frac{\sqrt{2\pi}}{2} \Phi(z) , \quad \bigl\lvert w f_z(w) \bigr\rvert \leq \Phi(z), 
    \end{align}
    and by symmetry, for $w < 0$, 
    \begin{align}
        \label{eq:bound2}
        \bigl\lvert f_z(w) \bigr\rvert \leq \frac{\sqrt{2 \pi}}{2}  \bigl\{1 - \Phi(z)\bigr\}  , \quad \bigl\lvert w f_z(w) \bigr\rvert \leq 1 - \Phi(z).
    \end{align}

    For $J_{21},$ by \cref{eq:solu,eq:bound1,eq:bound2}, we have 
    \begin{equ}
        \label{eq:j2100}
        J_{21} & \leq  \frac{1}{2\lambda} \bigl\{1 - \Phi(z)\bigr\} \E \Bigl\{\bigl\lvert \conep{D^{*} \Delta}{W} \bigr\rvert \mathbf{1}_{ \left\{ W < 0 \right\} }\Bigr\} \\
               & \quad +  \frac{\sqrt{2\pi}}{2\lambda} \bigl\{1 - \Phi(z)\bigr\} \E \Bigl\{\bigl\lvert \conep{D^{*} \Delta}{W} \bigr\rvert W \ex{ W^2/2 } \mathbf{1}_{ \left\{ 0 \leq W \leq z \right\} }\Bigr\} \\
               & \quad +  \frac{1}{2\lambda} \E \Bigl\{\bigl\lvert \conep{D^{*} \Delta}{W} \bigr\rvert \mathbf{1}_{ \left\{ W > z \right\} }\Bigr\}. 
    \end{equ}
    Thus, by \cref{eq:j2122,eq:j2100}, 
    \begin{equ}
        \label{eq:pj20} 
        |J_2| & \leq \frac{1}{2\lambda} \bigl\{1 - \Phi(z)\bigr\} \E \Bigl\{\bigl\lvert \conep{D^{*} \Delta}{W} \bigr\rvert \mathbf{1}_{ \left\{ W < 0 \right\} }\Bigr\} \\
               & \quad +  \frac{\sqrt{2\pi}}{2\lambda} \bigl\{1 - \Phi(z)\bigr\} \E \Bigl\{\bigl\lvert \conep{D^{*} \Delta}{W} \bigr\rvert W \ex{ W^2/2 } \mathbf{1}_{ \left\{ 0 \leq W \leq z \right\} }\Bigr\} \\
               & \quad +  \frac{1}{\lambda} \E \Bigl\{\bigl\lvert \conep{D^{*} \Delta}{W} \bigr\rvert \mathbf{1}_{ \left\{ W > z \right\} }\Bigr\}. 
    \end{equ}
    For the second term of \cref{eq:pj20}, by \cref{lem:j2}, we have 
    \begin{equ}
        \label{eq:pj22} 
        \ML \frac{\sqrt{2\pi} }{2\lambda} \E \Bigl\{\bigl\lvert \conep{D^{*} \Delta}{W} \bigr\rvert W \ex{ W^2/2 } \mathbf{1}_{ \left\{ 0 \leq W \leq z \right\} }\Bigr\} \leq 4 \sqrt{2\pi}    (1 + z^2) \tdelta_2(z).
    \end{equ}
    It is well known that for $z > 0$,  
    \begin{align*}
        \ex{-z^2/2} \leq \sqrt{2\pi} ( 1 + z ) \bigl\{ 1 - \Phi(z) \bigr\} \leq \frac{3 \sqrt{2 \pi}}{2}( 1 + z^2 ) \bigl\{ 1 - \Phi(z) \bigr\} . 
    \end{align*}
    For the third term of \cref{eq:pj20},  
    by \cref{c:c2}, 
    for $0 \leq z \leq \tau_0 $, 
    \begin{equ}
        \label{eq:pj23} 
        \frac{1}{\lambda} \E \Bigl\{\bigl\lvert \conep{D^{*} \Delta}{W} \bigr\rvert \mathbf{1}_{ \left\{ W > z \right\} }\Bigr\} &  \leq 2\tdelta_2(z) \ex{-z^2/2} \leq  3  \sqrt{2 \pi} (1 + z^2) \tdelta_2(z) \bigl\{ 1 - \Phi(z) \bigr\}.
    \end{equ}
    Therefore, combining \cref{eq:pj20,eq:pj22,eq:pj23}, for $0 \leq z \leq \tau_0$, we have 
    \begin{align*}
		|J_2| & \leq (7 \sqrt{2\pi} + 1 )   (1 + z^2) \tdelta_2(z) \bigl( 1 - \Phi(z) \bigr) \leq 20 (1 + z^2) \tdelta_2(z) \bigl( 1 - \Phi(z) \bigr). 
    \end{align*}
    Similarly, 
    \begin{align*}
		|J_1| & \leq 20   (1 + z^2) \tdelta_1(z) \bigl( 1 - \Phi(z) \bigr) , &
        |J_3| & \leq 20   (1 + z) \tdelta_3(z) \bigl( 1 - \Phi(z) \bigr) .
    \end{align*}
    This completes the proof together with \cref{eq:j123}. 
\end{proof}

\subsection{A moment generating function bound}%
\label{sub:4.2}
The following proposition provides a moment generating function bound.  
\begin{proposition}
	\label{prop:5.3}
	Under the assumptions in \cref{thm:2.1}. For $0 \leq t \leq \tau$, 
	we have 
	\begin{equ}
		\E e^{tW} \leq \exp \biggl\{  \frac{t^2}{2} \bigl( 1 + \delta_1 (t)  + \delta_2 (t) \bigr) + \delta_3(t) t \biggr\} \quad \text{for $0 \leq t \leq \tau$}.
        \label{eq:p4.4-aa}
	\end{equ}
\end{proposition}
\begin{proof}
	[Proof of \cref{prop:5.3}]
	The proof is based on a standard technique of proving exponential bounds in Stein's method.  
	Specifically, let $h (t) = \E e^{tW}$. In order to bound $h(t)$, we need to find an upper bound for $h'(t)$ using Stein's method, and then obtain a bound for $(\log h(t))'$. This technique was firstly considered by \cite{Ch07S}, who proved a concentration inequality for
	exchangeable pairs. \cite{Ch13S,Sh18C,Fa19S} also used this technique to prove Cram\'er-type moderate deviation theorems. 

	By \cref{c:a0,c:a3}, 
	\begin{math}
		\E \{ |D| e^{tW} \} < \infty\text{ and }\E \{ |R| e^{tW} \} < \infty. 
	\end{math}
	Since $\E \{ D \vert X \} = \lambda(W + R)$, we have $\E \{ |W| e^{tW} \} < \infty$. 
	Applying \cref{eq:ewfw} with $f(w) = e^{tw}$, and by \cref{eq:ewfw}, we have   
	\begin{align*}
		h'(t) & = \E \bigl\{ W e^{tW} \bigr\} = \frac{t}{2\lambda} \E \biggl\{ D \int_{-\Delta}^{0} e^{ t (W + u) } \dd u \biggr\} - \E \{ R e^{tW} \}\\
			  & \leq t \E e^{tW} + t \E \biggl\{ \biggl\lvert \frac{1}{2\lambda} \conep{ D \Delta }{X} - 1 \biggr\rvert e^{tW}  \biggr\}  + \frac{t}{2\lambda} \biggl\lvert \E \biggl\{ D \int_{-\Delta}^0 \bigl(e^{t (W + u)} - e^{tW}\bigr) \dd u \biggr\} \biggr\rvert \\
			  & \quad + \E \{ |R| e^{tW} \} \\
			  & \leq t \E e^{tW} + t \E \biggl\{ \biggl\lvert \frac{1}{2\lambda} \conep{ D \Delta }{X} - 1 \biggr\rvert e^{tW}  \biggr\} + t \E \biggl\{ \biggl\lvert \frac{1}{2\lambda} \conep{ D^* \Delta }{X} \biggr\rvert e^{tW}  \biggr\} + \E \{ |R| e^{tW} \}, 
	\end{align*}
	where we used \cref{lem:psi} in the last line. 
	By \cref{c:a1,c:a2,c:a3}, we have for $0 \leq t \leq \tau$, 
	\begin{align*}
		h'(t) & \leq \bigl\{ t [1 + \delta_1(t) + \delta_2(t) ] + \delta_3(t) \bigr\} h(t), 
	\end{align*}
	which is 
	\begin{equ}
		(\log h(t))' \leq  t [1 + \delta_1(t) + \delta_2(t) ] + \delta_3(t).
        \label{eq:p4.4-01}
	\end{equ}
	Note that $h(0) = 1$ and that $\delta_1(t), \delta_2(t),\delta_3(t)$ are increasing, we have 
	\begin{align*}
		\log h(t) \leq \int_0^t \{ u [1 + \delta_1(u) + \delta_2(u) ] + \delta_3(u) \} \dd u
		\leq \frac{t^2}{2} [ 1 + \delta_1(t) + \delta_2(t) ] + t \delta_3(t), 
	\end{align*}
	which further implies \cref{eq:p4.4-aa}. 
\end{proof}

\subsection{Proof of \texorpdfstring{\cref{thm:2.1}}{Theorem 2.1}}%
\label{sub:4.3}

With the help of \cref{thm:exch0} and \cref{prop:5.3}, we are ready to give the proof of \cref{thm:2.1}. 
\begin{proof}
	[Proof of \cref{thm:2.1}]
	By \cref{prop:5.3}, we have  
	\begin{math}
		\E e^{tW} \leq e^{\theta} e^{t^2/2}
	\end{math}
	for $0 \leq t \leq \tau_0(\theta)$. 
	By \cref{c:a1,c:a2,c:a3}, we have \cref{c:c1,c:c2,c:c3} are satisfied with $\tau_0 = \tau_0(\theta)$, and  
	\begin{align*}
		\kappa_1(t) = e^{\theta} \delta_1(t), \quad 
		\kappa_2(t) = e^{\theta} \delta_2(t), \quad 
		\kappa_3(t) = e^{\theta} \delta_3(t). 
	\end{align*} 
	This proves \cref{thm:2.1} by applying \cref{thm:exch0}.
\end{proof}


\newcommand{\IndI}[1]{ #1^{({i})} }
\newcommand{\IndII}[1]{ #1^{({i}')} }
\newcommand{\II}{ {{i}'} } 
\newcommand{\JJ}{ {j}' } 

\section{Proof of other results}%
\label{sec:5}
\subsection{Proof of \texorpdfstring{\cref{thm:local}}{Theorem 3.1}}%
\label{sub:5.1}

We apply \cref{thm:exch0} to prove the result. In \cref{subb:5.1}, we construct an exchangeable pair $(X, X')$, and give several exponential moment inequalities in \cref{prop:5.1,prop:5.2} (see below). The main
proof of \cref{thm:local} is put in \cref{subb:5.2}, and we provide the proof of \cref{prop:5.2} in \cref{subb:5.3}. In what follows, we use $i, j , i', j', k, l$ to represent elements in $[n]$ and use $\alpha, \alpha'$ to represent elements in $\mathcal{J}$.
Moreover, throughout this section, we denote by $C$ an absolute constant, which may take different values in different places. 

\subsubsection{Exchangeable pairs and exponential moment inequalities}%
\label{subb:5.1}

To begin with, we construct an exchangeable pair for $X$. Let $X^* = \{ X_{\alpha}^* : \alpha \in \mathcal{J} \}$ be an independent copy of $X$. For each $J \subset \mathcal{J}$, let $X^{J} = \{ X_{\alpha}^{J} : \alpha \in \mathcal{J} \}$, where 
\begin{align*}
	X_{\alpha}^{J} = 
	\begin{cases}
		X_{\alpha}^* & \text{if $\alpha \in J$}, \\
		X_{\alpha}  & \text{if $\alpha \not\in J$}. \\
	\end{cases}
\end{align*}
For each $1 \leq i \leq n$, define the random vector $(\xi_1^{(i)}, \dots, \xi_n^{(i)})$ by 
\begin{math}
	\xi_j^{(i)} 
	= f_j(X_{J_j}^{J_i}). 
\end{math}
Let $I$ be a random variable uniformly chosen from $\{1, \dots, n\}$, which is independent of all others. Let $X' = X^{J_I}$ and $W' = \sum_{j = 1}^n \xi_j^{(I)}$. Then, it follows that $( X, X' )$ is exchangeable, and therefore so is $(W, W')$. 
Let $A_i = \{ j : J_j \cap J_i \neq \emptyset \}$. Then, for any $i$, we have $\xi_j = \xi_j^{(i)}$ for all $j \not\in A_i$.
Define 
\begin{equation}
    \label{eq-local-dd}
	D \coloneqq \xi_I - \xi_I^{(I)}, \quad 
	D^*  \coloneqq |\xi_I - \xi_I^{(I)}|, \quad
	\Delta \coloneqq W - W' = \sum_{j \in A_{I}} (\xi_j - \xi_j^{(I)}).
\end{equation}
Note that $\xi_i^{(i)}$ is independent of $X$ and that $\E \xi_i = 0$, then it follows that 
\begin{align*}
	\conep{D}{X} & = \frac{1}{n} \sum_{i = 1}^n \conep{ \xi_i - \xi_i^{(i)} }{X}
				 = \frac{1}{n} \sum_{i = 1}^n (\xi_i - \E \xi_i) = \frac{1}{n} W. 
\end{align*}
Therefore, the condition (D1) holds with $\lambda = 1/n$ and $R = 0$.
Applying \cref{eq:ewfw} with $f(w) = w$, we have $\E \{D \Delta\}/(2\lambda) =  \E W^2 = 1$.
Moreover, 
\begin{equ}
	\frac{1}{2\lambda}\conep{ D \Delta }{X, X^*} - 1 & = \frac{1}{2\lambda} \bigl[ \conep{ D \Delta }{X, X^*} - \E \{ D \Delta \} \bigr] \\
														   & = \frac{1}{2} \sum_{i = 1}^n \bigl[ ( \xi_i - \xi_i^{(i)} )(\xi_j - \xi_j^{(i)}) - \E \{ ( \xi_i - \xi_i^{(i)} )(\xi_j - \xi_j^{(i)}) \}\bigr] , \\
	\frac{1}{\lambda}\conep{ D^* \Delta }{X, X^*} & = \sum_{i = 1}^n \lvert \xi_i - \xi_i^{(i)} \rvert(\xi_j - \xi_j^{(i)}).
    \label{eq-local-d2}
\end{equ}
To verify \cref{c:a0,c:a1,c:a2,c:a3}, we need to prove the following proposition. The first proposition is important in verifying \cref{c:a0}.
\begin{proposition}
	\label{prop:5.1}
	Assume that \cref{eq:c31,eq:c32,eq:c33} hold, we have $\E e^{tW} < \infty$ for $0 \leq t \leq a / d$. 
\end{proposition}
The proof of \cref{prop:5.1} is based on \cref{eq:c31,eq:c32} and independence, and the details is given in the Supplementary Material \citep{Zh21S}. The following proposition plays an important role in verifying \cref{c:a1,c:a2,c:a3}.
\begin{proposition}
	\label{prop:5.2}
	For each $i, j \in [n]$, let $\eta_{ij} = (\xi_i - \xi_i^{(i)}) ( \xi_j - \xi_j^{(i)} ) - \E \{ (\xi_i - \xi_i^{(i)}) ( \xi_j - \xi_j^{(i)} ) \}$ or $\eta_{ij} = \lvert \xi_{i} - \xi_i^{(i)} \rvert (\xi_j - \xi_j^{(i)})$. Under the assumptions of \cref{thm:local}, we have for $0 \leq
	t \leq a/(8dsb^2)$, 
	\begin{equ}
		\E \biggl\{ \biggl( \sum_{i \in [n]} \sum_{j \in A_i} \eta_{ij} \biggr)^2 e^{tW}\biggr\}
		& \leq C   n a^{-4} d^3 s^7 b (1  + n a^{-2} d s^3 b t^2)\E e^{tW}.
        \label{eq-p5.2-aa}
	\end{equ}
\end{proposition}
The proof of \cref{prop:5.2} is put in \cref{subb:5.3}.
\subsubsection{Proof of \tcref{thm:local}}%
\label{subb:5.2}

Now, we are ready to prove \cref{thm:local} with the help of \cref{prop:5.1,prop:5.2}.
\begin{proof}
	[Proof of \cref{thm:local}]
	Recall that condition (D1) is satisfied with $\lambda = 1/n$ and $R = 0$. 
	As $D = \xi_I - \xi_I^{(I)}$, by Cauchy's inequality and recalling the assumption that $\sum_{i = 1}^n \E \xi_i^2 = 1$, it follows that 
	\begin{align*}
		\E \{ |D| e^{tW} \} \leq ( \E \lvert \xi_I - \xi_I^{(I)} \rvert^2 ) ( \E e^{2 t W} )^{1/2} \leq 2 (\E e^{2tW})^{1/2}. 
	\end{align*}
	By \cref{prop:5.1}, and recalling that $s \geq 1, d \geq 1, b \geq 1$, we have \cref{c:a0} is satisfied for $0 \leq t \leq a / (8 d s b^2)$.  

	Moreover, by Jensen's inequality and H\"older's inequality, we have 
	\begin{align*}
		\E\bigl\{\bigl\lvert 1 - \frac{1}{2\lambda} \conep{D\Delta}{\bX}\bigr\rvert \ex{tW}\bigr\} 
		& \leq \E\bigl\{\bigl\lvert 1 - \frac{1}{2\lambda} \conep{D\Delta}{\bX, X^*}\bigr\rvert \ex{tW}\bigr\}, \\
		& \leq (\E e^{tW})^{1/2} \biggl( \E\bigl\{\bigl\lvert 1 - \frac{1}{2\lambda} \conep{D\Delta}{\bX, X^*}\bigr\rvert^2 \ex{tW}\bigr\} \biggr)^{1/2}.
	\end{align*}
	Let $\tau_0 = \min\{a /
	(8 d s b^2), \delta^{-1/3}\}$. 
	By \cref{eq-local-d2,prop:5.2}, we have \cref{c:a1} in \cref{thm:2.1} are satisfied with $\delta_1(t) = C \delta (1 + t)$. Similarly, \cref{c:a2} is also satisfied with $\delta_2(t) = C \delta(1 + t)$. Applying \cref{thm:2.1} yields the desired result. 
\end{proof}

\subsubsection{Proof of \tcref{prop:5.2}}
\label{subb:5.3}

Before giving the proof of \cref{prop:5.2}, we need to prove some preliminary lemmas, which provide moment inequalities for $g_{i\alpha}(X_{\alpha})$ and $\xi_i$.  
The proofs are given in the Supplementary Material \citep{Zh21S}. 

\begin{lemma}
	\label{lem:5.3}
	We have for all $i \in [n]$ and $\alpha \in J_i$, 
	\begin{align}
		\E \lvert g_{i \alpha}(X_{\alpha}) \rvert^6 & \leq 120 a^{-6} b, \label{eq:l5.3-aa}\\
		\E \lvert \xi_i \rvert^6 & \leq 120 a^{-6} s^6 b. \label{eq:l5.3-ab}
	\end{align}
\end{lemma}
\begin{lemma}
	\label{lem:5.4}	
	For any $i \in [n]$ and $\alpha \in J_i$, we have for $0 \leq t \leq a/(4 sd b^2)$,  
	\begin{align*}
		\E \bigl\{ g_{i \alpha}(X_{\alpha})^6 e^{t W} \bigr\} & \leq C a^{-6} b^2 \E e^{tW}.
	\end{align*}
\end{lemma}

\begin{proof}[Proof of \cref{prop:5.2}]
In this proof, $C$ denotes  an  absolute positive constant, and $O(a)$ denotes a quantity satisfying that $\lvert O(a) \rvert \leq Ca$. 
	Expanding the square term in the left hand side of \cref{eq-p5.2-aa} yields 
	\begin{equ}
		\text{LHS of \cref{eq-p5.2-aa}} 
		& = \sum_{i \in [n]} \sum_{j \in A_i} \sum_{i' \in [n]}  \sum_{j' \in A_i} \E \bigl\{ \eta_{ij} \eta_{i'j'} e^{tW} \bigr\}.
        \label{eq-p5.2-01}
	\end{equ}
	In what follows, we focus on estimating each summand on the right hand of \cref{eq-p5.2-01}, which is based on independence, Taylor's expansion and exponential moment bounds. 

	In order to calculate the summand on the right hand side of \cref{eq-p5.2-01}, we need to introduce some notation. Let $\tilde{X} = (\tilde{X}_m : m \in \mathcal{J})$ be an independent copy of $X$, which is also independent of $X^*$. Again, for any $J \subset \mathcal{J}$,
	let $\tilde{X}^{J} = (\tilde{X}_m^J : m \in \mathcal{J})$ be defined as 
	\begin{align*}
		\tilde{X}_m^J 
		= 
		\begin{cases}
			\tilde{X}_m & \text{if $m \in J$}, \\
			X_m & \text{if $m \not\in J$}.
		\end{cases}
	\end{align*}
	Let $J_{iji'j'} = J_i \cup J_j \cup J_{i'} \cup J_{j'}$. For each $1 \leq k \leq n$, let $\tilde{\xi}_k^{\,(i,j,i',j')} = f_k (\tilde{X}_{J_k}^{J_{iji'j'}})$ and let $\tilde{W}^{(i,j,i',j')} = \sum_{k \in [n]} \tilde{\xi}_k^{\,(i,j,i',j')}$. Then, we have $\tilde{W}^{(i,j,i',j')}$ is independent of $(\eta_{ij}, 
	\eta_{i',j'})$ and has the same distribution as $W$. 
	For $1 \leq i \leq n$, let $A_i = \{ j : J_i \cap J_j \neq \emptyset \}$.  From \cref{eq:c33}, 
	\begin{equation}
		\label{eq:Aibound}
		\lvert A_i \rvert \leq \min\{ sd, n \} . 
	\end{equation}
	For any $i, j, i', j'$, let $A_{iji'j'} = A_i \cup A_j \cup A_{i'} \cup A_{j'}$. 
	Moreover, it follows that 
	\begin{math}
		W - \tilde{W}^{(i,j,i',j')}	= \sum_{k \in A_{iji'j'}} ( \xi_k - \tilde{\xi}_k^{\,(i,j,i',j')} ).
	\end{math}
	Applying Taylor's expansion to right hand side of \cref{eq-p5.2-01}, we obtain
	\begin{align*}
	\ML \text{RHS of \cref{eq-p5.2-01}}\\
		& = \sum_{i \in [n]} \sum_{j \in A_i} \sum_{i' \in [n]} \sum_{j' \in A_{i'}} \E \bigl\{ \eta_{ij} \eta_{i'j'} e^{t \tilde{W}^{(i,j,i',j')}} \bigr\} \\
		& \quad + t \sum_{i \in [n]} \sum_{j \in A_i} \sum_{i' \in [n]} \sum_{j' \in A_{i'}} \sum_{k \in A_{iji'j'} } \E \bigl\{ \eta_{ij} \eta_{i'j'} \xi_k e^{t \tilde{W}^{( i,j,i',j' )}} \bigr\}\\
		& \quad - t \sum_{i \in [n]} \sum_{j \in A_i} \sum_{i' \in [n]} \sum_{j' \in A_{i'}} \sum_{k \in A_{iji'j'} } \E \bigl\{ \eta_{ij} \eta_{i'j'} \tilde{\xi}_k^{\,(i,j,i',j')} e^{t \tilde{W}^{( i,j,i',j' )}} \bigr\}\\
		& \quad + O(t^2) \sum_{i \in [n]} \sum_{j \in A_i} \sum_{i' \in [n]} \sum_{j' \in A_{i'}} \E \biggl\{ \lvert \eta_{ij} \eta_{i'j'} \rvert \biggl( \sum_{k \in A_{iji'j'}} ( \xi_{k} - \tilde{\xi}_{k}^{\,(i,j,i',j')} ) \biggr)^2 e^{t \tilde{W}^{(i,j,i',j')}} \biggr\}\\
		& \quad + O(t^2) \sum_{i \in [n]} \sum_{j \in A_i} \sum_{i' \in [n]} \sum_{j' \in A_{i'}} \E \biggl\{ \lvert \eta_{ij} \eta_{i'j'} \rvert \biggl( \sum_{k \in A_{iji'j'}} ( \xi_{k} - \tilde{\xi}_{k}^{\,(i,j,i',j')} ) \biggr)^2 e^{t W} \biggr\}\\
		& \eqqcolon H_1 + H_2 - H_3 + H_4 + H_5. 
	\end{align*}
	For $H_1$, recalling that $\tilde{W}^{(i,j,i',j')}$ is independent of $(\eta_{ij}, \eta_{i',j'})$ and has the same distribution as $W$, we have 
	\begin{equ}
		\E \{ \eta_{ij} \eta_{i'j'}  e^{t \tilde{W}^{(i,j,i',j')}} \}  = \E \{ \eta_{ij}  \eta_{i'j'}\} \E e^{tW}.
        \label{eq-p5.2-H1}
	\end{equ}
	Now, note that $\eta_{ij}$ is independent of $\eta_{i'j'}$ when $i' \in A_{ij}^c$ and $j' \in A_{ij}^c$, and that $\E \eta_{ij} = \E \eta_{i'j'} = 0$, thus, 
	\begin{align}
		\E \{ \eta_{ij} \eta_{i'j'} \} = 0 \quad \text{if $i' \in A_{ij}^c$ and $j' \in A_{ij}^c$}.
		\label{eq:eta0}
	\end{align}
Moreover, by \cref{lem:5.3} and H\"older's inequality, 
\begin{equ}
	\E \lvert \eta_{ij} \rvert^2 \leq C ( \E \xi_i^4 + \E \xi_j^4 )  \leq C a^{-4}  s^4 b^{2/3}. 
    \label{eq:eta2}
\end{equ}
By \cref{eq:Aibound} and \cref{eq:c33},
\begin{equ}
	\lvert \{ i' : j' \in A_{i'}\} \rvert = |\{ i' : J_{i'} \cap J_{j'} \neq \emptyset \}| = \lvert A_{j'} \rvert \leq \min \{ sd, n \}, 
    \label{eq:Aibound2}
\end{equ}
and then we obtain  
	\begin{equ}
		\lvert H_1 \rvert
		& \leq \E e^{tW}\sum_{i \in [n]} \sum_{j \in A_i} \sum_{i' \in [n]} \sum_{j' \in A_{i'}} \E \lvert \eta_{ij} \eta_{i'j'} \rvert \mathbf{1}_{\{ i' \in A_{ij} \text{ or } j' \in A_{ij} \}}\\
		& \leq C a^{-4} s^4 b^{2/3} \E e^{tW}\sum_{i \in [n]} \sum_{j \in A_i} \sum_{i' \in [n]} \sum_{j' \in A_{i'}}   \mathbf{1}_{\{ i' \in A_{ij} \text{ or } j' \in A_{ij} \}}\\
		& \leq C a^{-4} s^4 b^{2/3} \E e^{tW} \biggl( \sum_{i \in [n]} \sum_{j \in A_i} \sum_{i' \in A_{ij}} \sum_{j' \in A_{i'}}   1 + \sum_{i \in [n]} \sum_{j \in A_i} \sum_{j' \in A_{ij}} \sum_{i' : j' \in A_{i'}}   1 \biggr)  \\
		& \leq C a^{-4} s^4 b^{2/3} ( n d^3 s^3 ) \E e^{tW} \leq C n a^{-4} d^3 s^7 b^{2/3} \E e^{tW}.
        \label{eq-p5.2-M1}
	\end{equ}
	To calculate $H_2$, we need to introduce some more notation. For fixed $i,j,i',j',k$, let $J_{iji'j'k} =  J_{iji'j'} \cup J_k$, and for each $1 \leq l \leq n$, let $\tilde{\xi}_l^{\,(i,j,i',j',k)} = f_l(\tilde{X}_{J_l}^{J_{iji'j'k}})$ and $\tilde{W}^{(i,j,i',j',k)} =
	\sum_{l \in [n]} \tilde{\xi}_l^{\,(i,j,i',j',k)}$. Then, we have $\tilde{W}^{(i,j,i',j',k)}$ is independent of $(\eta_{ij}, \eta_{i',j'}, \xi_k)$ and has the same distribution as $W$, and thus, 
\begin{equ}
	\E \bigl\{ \eta_{ij} \eta_{i'j'} \xi_k e^{t \tilde{W}^{(i,j,i',j',k)}} \bigr\} = \E \{ \eta_{ij} \eta_{i'j'} \xi_k e^{t \tilde{W}^{(i,j,i',j',k)}} \} =	\E \{ \eta_{ij} \eta_{i'j'} \xi_k	 \} \E e^{t W}.
    \label{eq-p5.2-02}
\end{equ}
	Moreover, note that 
	\begin{equ}
		\tilde{W}^{(i,j,i',j')} - \tilde{W}^{(i,j,i',j',k)}
		= \sum_{l \in A_k} ( \tilde{\xi}_l^{\,(i,j,i',j')} - \tilde{\xi}_{l}^{\,(i,j,i',j',k)} ). 
        \label{eq-p5.2-03}
	\end{equ}
	By Taylor's expansion again and by \cref{eq-p5.2-03}, we have
	\begin{align*}
		\begin{split}
			H_2 & =  t \sum_{i \in [n]} \sum_{j \in A_i} \sum_{i' \in [n]} \sum_{j' \in A_{i'}} \sum_{k \in A_{iji'j'}} \E \bigl\{ \eta_{ij} \eta_{i'j'} \xi_k e^{t \tilde{W}^{(i,j,i',j',k)}} \bigr\} \\
				& \quad + O(t^2) \sum_{i \in [n]} \sum_{j \in A_i} \sum_{i' \in [n]} \sum_{j' \in A_{i'}} \sum_{k \in A_{iji'j'}}\sum_{l \in A_k} \E \bigl\{ \lvert \eta_{ij} \eta_{i'j'} \xi_k (\tilde\xi_l^{\,(i,j,i',j')} - \tilde{\xi}_l^{\,(i,j,i',j'k)}) \rvert e^{t \tilde{W}^{(i,j,i',j')}} \bigr\}\\
				& \quad + O(t^2) \sum_{i \in [n]} \sum_{j \in A_i} \sum_{i' \in [n]} \sum_{j' \in A_{i'}} \sum_{k \in A_{iji'j'}}\sum_{l \in A_k} \E \bigl\{ \lvert \eta_{ij} \eta_{i'j'} \xi_k (\tilde\xi_l^{\,(i,j,i',j')} - \tilde{\xi}_l^{\,(i,j,i',j'k)}) \rvert e^{t \tilde{W}^{(i,j,i',j',k)}} \bigr\}\\
				& \eqqcolon H_{21} + H_{22}  + H_{23}. 
		\end{split}
	\end{align*}
	For $H_{21}$, by \cref{eq-p5.2-02} and by the fact that $A_{iji'j'} = A_{ij} \cup A_{i'j'}$, we have 
	\begin{equ}
		|H_{21}| & \leq t \E e^{tW} \sum_{i \in [n]} \sum_{j \in A_i} \sum_{i' \in [n]} \sum_{j' \in A_{i'}} \sum_{k \in A_{iji'j'}} |\E \{ \eta_{ij} \eta_{i'j'} \xi_k \}| \\
				 & \leq t \E e^{tW} \sum_{i \in [n]} \sum_{j \in A_i} \sum_{i' \in [n]} \sum_{j' \in A_{i'}} \biggl( \sum_{k \in A_{ij}} |\E \{ \eta_{ij} \eta_{i'j'} \xi_k\}| + \sum_{k \in A_{i'j'}} |\E \{ \eta_{ij} \eta_{i'j'} \xi_k\}|\biggr)  \\
		& = 2 t \E e^{tW} \sum_{i \in [n]} \sum_{j \in A_i} \sum_{i' \in [n]} \sum_{j' \in A_{i'}} \sum_{k \in A_{ij}} |\E \{ \eta_{ij} \eta_{i'j'}
		\xi_k\}|, 
        \label{eq-p5.2-H21}
	\end{equ}
	where the last line is based on symmetry. If $i', j' \in A_{ijk}^c$, then $\eta_{i'j'}$ is independent of $(\eta_{ij}, \xi_k)$, and thus 
	\begin{math}
		\E \{ \eta_{ij} \eta_{i'j'}
		\xi_k\} = \E \{ \eta_{ij} \xi_k	\} \E \eta_{i'j'} = 0. 
	\end{math}
	Moreover, by H\"older's inequality and \cref{lem:5.3}, 
	\begin{align*}
		\E \lvert \eta_{ij} \eta_{i'j'} \xi_k \rvert \leq C ( \E |\xi_i|^5 + \E \lvert \xi_j \rvert^5 + \E \lvert \xi_{i'} \rvert^5 + \E \lvert \xi_{j'} \rvert^5 + \E \lvert \xi_k \rvert^5 ) \leq C a^{-5} s^5 b^{5/6}.
	\end{align*}
	Therefore, by \cref{eq:Aibound,eq:Aibound2},
	\begin{equ}
		\lvert H_{21} \rvert
		& \leq C t a^{-5} s^5 b^{5/6} \E e^{tW} \sum_{i \in [n]} \sum_{j \in A_i} \sum_{i' \in [n]} \sum_{j' \in A_{i'}} \sum_{k \in A_{ij}} \mathbf{1}_{ \{ i' \in A_{ijk} \text{ or } j' \in A_{ijk} \} }\\
		& \leq C t a^{-5} s^5 b^{5/6} \E e^{tW} \sum_{i \in [n]} \sum_{j \in A_i}  \sum_{k \in A_{ij}} \biggl( \sum_{i' \in A_{ijk}} \sum_{j' \in A_{i'}} 1 + \sum_{j' \in A_{ijk}} \sum_{i' : j' \in A_{i'}} 1 \biggr)\\
		& \leq C t a^{-5} s^5 b^{5/6} \bigl( n \min \{ n, sd \}^4 \bigr)  \E e^{tW}  \\
		& \leq C n  a^{-4} d^3 s^7 b (1 + t^2 n a^{-2} d s^3 b) \E e^{tW},  
        \label{eq-p5.2-M2}
	\end{equ}
	where we used Cauchy's inequality and the fact that $b \geq 1$ in the last line. 

	For $H_{22}$, by Young's inequality, we have 
	\begin{equ}
		\MoveEqLeft \E \Bigl\{ \Bigl\lvert \eta_{ij} \eta_{i'j'} \xi_k (\tilde\xi_l^{\,(i,j,i',j')} - \tilde{\xi}_l^{\,(i,j,i',j'k)}) \Bigr\rvert e^{t \tilde{W}^{(i,j,i',j')}} \Bigr\}\\
		& \leq \E \Bigl\{ \Bigl\lvert \eta_{ij} \eta_{i'j'} \xi_k \tilde\xi_l^{\,(i,j,i',j')}  \Bigr\rvert e^{t \tilde{W}^{(i,j,i',j')}} \Bigr\} + \E \Bigl\{ \Bigl\lvert \eta_{ij} \eta_{i'j'} \xi_k  \tilde{\xi}_l^{\,(i,j,i',j'k)} \Bigr\rvert e^{t \tilde{W}^{(i,j,i',j')}}
		\Bigr\}\\
		& \leq \frac{2}{3} \E \bigl\{ \lvert \eta_{ij} \rvert^3 e^{t \tilde{W}^{(i,j,i',j')}} \bigr\} + \frac{2}{3} \E \bigl\{ \lvert \eta_{i'j'} \rvert^3 e^{t \tilde{W}^{(i,j,i',j')}} \bigr\} + \frac{1}{3} \E \bigl\{ \xi_k^6 e^{t \tilde{W}^{(i,
		j,i',j')}} \bigr\} \\
		& \quad + \frac{1}{6} \E \bigl\{ \lvert \tilde\xi_l^{\,(i,j,i',j')} \rvert^6 e^{t \tilde{W}^{(i,j,i',j')}} \bigr\} + \frac{1}{6} \E \bigl\{ \lvert \tilde\xi_l^{\,(i,j,i',j',k)} \rvert^6 e^{t \tilde{W}^{(i,j,i',j')}} \bigr\}.
        \label{eq-p5.2-04}
	\end{equ}
	For the first two terms of the right hand side of \cref{eq-p5.2-04}, recalling that $\tilde{W}^{(i,j,i',j')}$ is independent of $(\eta_{i,j}, \eta_{i',j'})$ and has the same distribution as $W$, and by the definition of $\eta_{ij}$ and by \cref{lem:5.3}, we have 
	\begin{equ}
		\E \bigl\{ \lvert \eta_{ij} \rvert^3 e^{t \tilde{W}^{(i,j,i',j')}} \bigr\} 
		& =  \E \lvert \eta_{ij} \rvert^3 \E e^{tW} \leq C \E e^{tW}( \E \lvert \xi_i \rvert^6 + \E \lvert \xi_j \rvert^6 )\leq C a^{-6} s^6 b \E e^{tW}, \\
		\E \bigl\{ \lvert \eta_{i'j'} \rvert^3 e^{t \tilde{W}^{(i,j,i',j')}} \bigr\} 
		& =  \E \lvert \eta_{i'j'} \rvert^3 \E e^{tW} \leq C \E e^{tW}( \E \lvert \xi_{i'} \rvert^6 + \E \lvert \xi_{j'} \rvert^6 ) \leq C a^{-6} s^6 b \E e^{tW}. 
        \label{eq-p5.2-09}
	\end{equ}
	For the third term of the right hand side of \cref{eq-p5.2-04}, by \cref{eq:c31},
	\begin{equ}
		\E \bigl\{ \xi_k^6 e^{t \tilde{W}^{(i,j,i',j')}} \bigr\}
		& \leq s^5 \sum_{m \in J_k} \E \bigl\{ g_{km}(X_m)^6 e^{t \tilde{W}^{(i,j,i',j')}} \bigr\}. 
        \label{eq-p5.2-05}
	\end{equ}
	Now, if $m \in J_k \setminus J_{iji'j'}$, then $X_m$ is independent of $\tilde{W}^{(i,j,i',j')}$, and thus 
	\begin{equ}
		\E \bigl\{ g_{km}(X_m)^6 e^{t \tilde{W}^{(i,j,i',j')}} \bigr\}
		& = \E g_{km}(X_m)^6  \E e^{t W}
		\leq C a^{-6} b \E e^{tW}, 
        \label{eq-p5.2-06}
	\end{equ}
	where we used \cref{lem:5.3} in the last inequality. If $m \in J_k \cap J_{iji'j'}$, then $(X_m, \tilde{W}^{(i,j,i',j')})$ has the same distribution as $(X_m, W)$ by the construction of $\tilde{W}^{(i,j,i',j')}$, and thus 
	\begin{equ}
		\E \bigl\{ g_{km}(X_m)^6 e^{t \tilde{W}^{(i,j,i',j')}} \bigr\}
		& = \E \bigl\{ g_{km}(X_m)^6 e^{t W} \bigr\} \leq C a^{-6} b^2 \E e^{tW},
        \label{eq-p5.2-07}
	\end{equ}
	where we used \cref{lem:5.4} in the last inequality. By \cref{eq-p5.2-06,eq-p5.2-07} and recalling that $b \geq 1$, we have 
	\cref{eq-p5.2-05} can be further bounded by 
	\begin{equ}
		\E \bigl\{ \xi_k^6 e^{t \tilde{W}^{(i,j,i',j')}} \bigr\} \leq C a^{-6} s^6 b^2 \E e^{tW}.
        \label{eq-p5.2-08}
	\end{equ}
	Using a similar argument to \cref{eq-p5.2-08}, we have 
	\begin{equ}
		 \E \bigl\{ |\tilde\xi_k^{\,(i,j,i',j')}|^6 e^{t \tilde{W}^{(i,j,i',j')}} \bigr\} \leq C a^{-6} s^6 b^2 \E e^{tW}, \\
		 \E \bigl\{ |\tilde\xi_k^{\,(i,j,i',j',k)}|^6 e^{t \tilde{W}^{(i,j,i',j')}} \bigr\} \leq C a^{-6} s^6 b^2 \E e^{tW}.
        \label{eq-p5.2-10}
	\end{equ}
	Combining \cref{eq-p5.2-04,eq-p5.2-09,eq-p5.2-08,eq-p5.2-10}, and by \cref{eq:Aibound},  we have 
	\begin{equ}
		\lvert H_{22} \rvert & \leq C t^2 a^{-6} s^{6} b^2  ( n^2 d^4 s^4 ) \E e^{tW} \leq Ct^2 n^2 a^{-6} d^4 s^{10} b^2 \E e^{tW}. 
        \label{eq-p5.2-H22}
	\end{equ}
	Similarly, 
	\begin{equ}
		|H_{23}| & \leq C  t^2 n^2 a^{-6} d^4 s^{10} b^2 \E e^{tW}.
        \label{eq-p5.2-H23}
	\end{equ}
	Combining \cref{eq-p5.2-H21,eq-p5.2-H22,eq-p5.2-H23}, we have 
	\begin{equ}
		|H_2| & \leq C t n a^{-5} d^4 s^{9} b^{5/6} \E e^{tW} + C t^2 n^2 a^{-6} d^4 s^{10} b^2 \E e^{tW}
        \label{eq-p5.2-H2}
	\end{equ}
	For $H_3$, note that $(\tilde{\xi}_k^{\,(i,j,i',j')}, \tilde{W}^{(i,j,i',j')})$ is independent of $(\eta_{ij}, \eta_{i',j'})$ and has the same distribution as $(\xi_k, W)$. By \cref{eq:eta0,eq:eta2}, we have 
	\begin{equ}
		|H_3| & \leq t \sum_{i \in [n]} \sum_{j \in A_i} \sum_{i' \in [n]} \sum_{j' \in A_{i'}} \sum_{k \in A_{iji'j'}} \E \lvert \eta_{ij} \eta_{i'j'} \rvert \E \bigl\{ \xi_k e^{tW} \bigr\} \mathbf{1}_{ \{ i' \in A_{ij} \text{ or } j' \in A_{ij} \} }
		\\
			  & \leq C t a^{-4}  s^4 b^{2/3} \sum_{i \in [n]} \sum_{j \in A_i} \sum_{i' \in [n]} \sum_{j' \in A_{i'}} \sum_{k \in A_{iji'j'}} \E \bigl\{ \lvert \xi_k \rvert e^{tW} \bigr\} \mathbf{1}_{ \{ i' \in A_{ij} \text{ or } j' \in A_{ij} \} }
			  \\
			  & \leq C t n a^{-4} s^4 b^{2/3} \min \{ n, sd \}^4 \max_{k \in [n]} \E \bigl\{ \lvert \xi_k \rvert e^{tW} \bigr\} .
        \label{eq-p5.2-H3}
	\end{equ}
	By H\"older's inequality, \cref{eq:c31}, \cref{eq:c33} and \cref{lem:5.4}, for all $k \in [n]$, 
	\begin{equ}
		\E \bigl\{ \lvert \xi_k \rvert e^{tW} \bigr\} 
		& \leq \sum_{m \in J_k} \E \bigl\{ g_{km}(X_m) e^{tW} \bigr\}\\
		& \leq \sum_{m \in J_k} \biggl( \E \bigl\{ g_{km}(X_m)^6 e^{tW} \bigr\} \biggr)^{1/6} ( \E e^{tW} )^{5/6} \\
		& \leq C a^{-1} s b^{1/3} \E e^{tW}.
        \label{eq-p5.2-11}
	\end{equ}
	Substituting \cref{eq-p5.2-11} to \cref{eq-p5.2-H3} yields 
	\begin{align*}
		\lvert H_3 \rvert 
		& \leq C t n a^{-5}  s^{5} b \min \{ n, sd \}^4 \E e^{tW}  \leq C n  a^{-4} d^3 s^7 b (1 + t^2 n a^{-2} d s^3 b) \E e^{tW}. 
	\end{align*}

	For $H_4$ and $H_5$, similar to \cref{eq-p5.2-04}--\cref{eq-p5.2-H22}, we have 
	\begin{equ}
		H_4 + H_5 = O(t^2) n^2 a^{-6} d^4 s^{10} b^2 \E e^{tW}.
        \label{eq-p5.2-H4}
	\end{equ}
	Combining \cref{eq-p5.2-01,eq-p5.2-H1,eq-p5.2-H2,eq-p5.2-H3,eq-p5.2-H4} yields \cref{eq-p5.2-aa}.
	This completes the proof. 
\end{proof}

\subsection{Proof of \tcref{thm:cw}}
In this subsection, we denote by $C$ a general constant that depends only on $\beta$, where $0 < \beta < 1$.  
Let $\mathcal{X}$ be the sigma field generated by $(X_1, \dots, X_n)$. 
For each $1 \leq i \leq n$,  let 
$X_i'$ be conditionally independent of $X_i$  with the conditional distribution of $X_i$ given $\left\{ X_j, j \neq i \right\}$. 
Let $I$ be a random index that is uniformly distributed over $\left\{ 1, \dots, n \right\}$ 
and independent of all others. Define $S_n' = S_n - X_I + X_I'$; then $(S_n, S_n')$ is an exchangeable pair. 

For $n \leq 16 \max\{1, \beta / (1 - \beta)\}$, and for $0 \leq z \leq \sqrt{n}$, we have $z \leq z_{\beta} :=4  \sqrt{ \max\{1,\beta / (1 - \beta)\}}.$
By \cite[][Theorem 3.2]{Sh19B}, for $0 \leq z \leq z_\beta$, 
\begin{align*}
 \bigl\lvert \IP(W > z ) - (1 - \Phi(z)) \bigr\rvert & \leq C n^{-1/2} \leq C n^{-1/2} (1 - \Phi(z_{\beta})) \leq C n^{-1/2}(1 - \Phi(z)).
\end{align*}
Hence 
\cref{eq:tcw-a} holds. For $n > 16\max\{1,\beta(1 - \beta)\}$, we apply \cref{thm:exch0} to prove the moderate deviation result.  To this end, we need to prove the following propositions. 

\begin{proposition}
    \label{pro:cw1}
    Let $\bar X = S_n/n$ and let 
    \begin{equ}
		\label{eq:5.67}
        R_1 = \conep{S_n - S_n'}{\mathcal{X}} - (1 - \beta) \bar{X}.  
    \end{equ}
	For $n > 16 \max\{1, \beta / (1 - \beta)\}$ and $0 \leq t \leq \sqrt{n}$, we have  
    \begin{align}
        \label{eq:c5.3-b}
        \E \ex{t W} & \leq C \ex{t^2/2},  \\
		\label{eq:p1.1-b}
		\E \{ \lvert \bar X \rvert \ex{tW} \} & \leq C n^{-1/2} (1 + t) \ex{t^2/2}, \\
        \label{eq:p1.1-a} 
		\E \{|R_1| \ex{t W}\} & \leq C n^{-1} (1 + t^2) \ex{t^2/2}, \\
        \E \Bigl\{\bigl\lvert \conepbc{ (S_n - S_n')^2 }{\mathcal{X}} - 2 \bigr\rvert \ex{t W}\Bigr\} & \leq C n^{-1/2} (1 + t) \ex{t^2/2}, 
		\label{eq:p1.2-a} \\
        \E \Bigl\{\bigl\lvert \conepbc{ (S_n - S_n')^3 }{\mathcal{X}} \bigr\rvert \ex{t W}\Bigr\} & \leq C n^{-1/2}(1 + t) \ex{t^2/2}. 
        \label{eq:p1.2-b} 
    \end{align} 
\end{proposition}

With the help of \cref{pro:cw1}, we can check the \cref{c:c1,c:c2,c:c3} immediately. To see this, 
let $W' = n^{-1/2} (1 - \beta)^{1/2} S_n'$, $D = \Delta = W - W'$ and $D^* = n^{-1/2} (1 - \beta)^{1/2} + n^{1/2} (1 - \beta)^{-1/2} D^2$. Then, it follows that $|D| \leq D^*$ and $D^*$ is symmetric with respect to $W$ and $W'$. 
Observe that 
\begin{align*}
    \conep{D}{ \mathcal{X} } & = \conep{ W - W' }{\mathcal{X}} = n^{-1/2}(1 - \beta)^{1/2} \conep{S_n - S_n'}{ \mathcal{X}}  = \lambda (W + R), 
\end{align*}
where $\lambda = (1-\beta) /n $ and $R = n^{1/2} (1 - \beta)^{1/2} R_1$. Moreover, 
\begin{align*}
    \frac{1}{2\lambda} \conep{D \Delta}{\mathcal{X}} - 1 = \frac{1}{2\lambda} \conepbc{(W - W')^2}{ \mathcal{X}} - 1 = \frac{1}{2} \Bigl( \conepbc{ (S_n - S_n')^2 }{\mathcal{X}} - 2\Bigr), 
\end{align*}
and 
\begin{align*}
	\frac{1}{\lambda} \conep{ D^* \Delta }{\mathcal{X}} & = \frac{n^{-1/2}(1 - \beta)^{1/2}}{\lambda} \conep{ W - W' }{\mathcal{X}} + \frac{n^{1/2}(1 - \beta)^{-1/2}}{\lambda} \conep{ (W - W')^3 }{\mathcal{X}}\\
																		& =  \conep{S_n - S_n'}{ \mathcal{X} } + \conep{(S_n - S_n')^3}{\mathcal{X}}. 
\end{align*}
Hence, by \cref{pro:cw1}, we have that \cref{c:c1,c:c2,c:c3} are satisfied with $\tau_0 = n^{1/2}, \delta_1 (t) = \delta_2(t) = C n^{-1/2} (1 + t)$ and $\delta_3 (t) = C n^{-1}(1 + t^2)$. This completes the proof of \cref{thm:cw} by applying \cref{thm:exch0}. 

It suffices to prove \cref{pro:cw1}; 
to this end, we need to show some preliminary lemmas. 

\begin{lemma}
    \label{lem:c5.1} 
    For $0 \leq \theta < 1$, we have 
    \begin{align}
        \label{eq:c5.1-a}
        \E \ex{\theta \xi^2/2} & \leq C_{\theta}, 
    \end{align}
    where $C_{\theta} > 0$ is a constant depending on $\theta$.
\end{lemma}

\begin{lemma}
	For $1 \leq m \leq n$, let $T_m = \xi_1 + \dots + \xi_m$ and assume that $n \geq 16 \max \{ 1, \beta/(1 - \beta)\}$. 
    We have  for all $m \in [n]$ and $0 \leq t \leq \sqrt{n} $, 
    \begin{align}
        \label{eq:c5.2-a}
		\E \exp \biggl( \Bigl( \frac{\beta}{2n} + \frac{2\beta}{n^2} \Bigr) T_m^2 + \Bigl(\frac{1 - \beta}{n}\Bigr)^{1/2}t  T_m \biggr) & \leq C \ex{t^2/2} ,\\
        \label{eq:c5.2-b}
		\E \biggl\{T_m^2 \exp \biggl( \Bigl( \frac{\beta}{2n} + \frac{2\beta}{n^2} \Bigr) T_m^2 + \Bigl(\frac{1 - \beta}{n}\Bigr)^{1/2}t  T_m \biggr)\biggr\} & \leq C n (1 + t^2) \ex{t^2/2}, 
    \end{align}
    where $C > 0$ is a constant depending only on $\beta$. 
    \label{lem:5.2}
\end{lemma}

Recall that for each $1 \leq i \leq n$, given $\left\{ X_j, j \neq i \right\}$,  
$X_i'$ is conditionally independent of $X_i$  with the conditional distribution of $X_i$. 
Also, recall the normalizing constant $Z_n = \E \exp\bigl\{ \beta(\xi_1 + \dots + \xi_n)^2/(2n) \bigr\}$. 
\begin{lemma} 
    \label{lem:c5.3}
    For $0 < \beta < 1$, we have 
    \begin{align}
        \label{eq:c5.3-a}
        1 \leq Z_n \leq C, 
    \end{align}
	and for $n > 4 \{1,\beta /(1 - \beta)\}$ and $0 \leq t \leq \sqrt{n}$, 
    \begin{align}
        \label{eq:c5.3-c} 
        \E \{|X_i|^{6} \ex{t W}\} & \leq C \ex{t^2/2}, \\
        \label{eq:c5.3-d} 
        \E \{|X_i'|^6 \ex{t W}\} & \leq C \ex{t^2 / 2}. 
    \end{align}
\end{lemma}

The following lemma provides an upper bound of $\bigl\lvert \E \bigl\{  (X_i^k -\mu_k )(X_j^k - \mu_k) \ex{t W}  \bigr\} \bigr\rvert$, whose proof is similar to Lemma 5.7 of \cite{Sh19B}.  
\begin{lemma}
    \label{lem:5.7}
	For $i \neq j \in [n]$ and $k = 1 , 2, 3$, we have for $n > 16\max\{ 1, \beta/(1 - \beta)\}$ and $0 \leq t \leq \sqrt{n}$, 
    \begin{align*}
		\bigl\lvert \E \bigl\{  (X_i^k -\mu_k )(X_j^k - \mu_k) \ex{t W}  \bigr\} \bigr\rvert \leq C n^{-1} (1 + t^2) \ex{t^2/2}, 
    \end{align*}
	where $\mu_k = \E \xi^k$. 
\end{lemma}
The details of proofs of \cref{lem:c5.1,lem:5.2,lem:c5.3,lem:5.7} are put in the Supplementary Material \citep{Zh21S}.

Now we are ready to prove \cref{pro:cw1}. 
\begin{proof}
    [Proof of \cref{pro:cw1}]
    Let $\xi, \xi_1, \dots, \xi_n$ be i.i.d.\ random variables with probability measure $\rho$. Let $\E_{\xi}$ denote the expectation with respect to $\xi$ conditional on other random variables. Recall that $\bar{X} = (X_1 + \dots + X_n)/n$. For each $i \in
	[n]$, let $\bar{X}_i = \bar{X} - X_i/n$. In what follows, we fix $n > 16 \max\{1,\beta / (1 - \beta)\}$ and $0 \leq t \leq \sqrt{n}$. Again, let $\alpha_n(t) = n^{-1/2}(1  - \beta)^{1/2} t$. We now prove \crefrange{eq:c5.3-b}{eq:p1.2-b} one by one.

	{\medskip\noindent\it (i). Proof of \cref{eq:c5.3-b}.}
	By \cref{eq:cw-c2}, \eqref{eq:c5.2-a} and \eqref{eq:c5.3-a}, we have 
    \begin{align*}
		\E \ex{t W} & =  \frac{1}{Z_n} \E \exp \biggl( \frac{\beta}{2n} T_n^2 + \alpha_n(t) T_n \biggr) \leq C \ex{t^2/2}.
    \end{align*}
	{\medskip\noindent\it (ii). Proof of \cref{eq:p1.1-b}.}
	Let $T_n = \xi_1 + \dots \xi_n$. 
	By \cref{eq:cw-c2,eq:c5.2-b,eq:c5.3-a}, we have 
	\begin{equ}
		\E \bigl\{ \lvert \bar X \rvert^2 \ex{t W} \bigr\}
		& = \frac{1}{n^2 Z_n} \E \biggl\{ T_n^2 \exp \biggl( \frac{\beta}{2n} T_n^2 + \alpha_n(t) T_n \biggr) \biggr\} \leq C n^{-1} (1 + t^2) e^{t^2/2}.
        \label{eq-p1.1-b0}
	\end{equ}
	By H\"older's inequality, \cref{eq:c5.3-b,eq-p1.1-b0}, we have 
	\begin{align*}
		\E \bigl\{ \lvert \bar X \rvert \ex{t W} \bigr\}
		& \leq (\E \ex{tW})^{1/2} \bigl( \E \bigl\{ \lvert \bar X \rvert^2 \ex{t W} \bigr\} \bigr)^{1/2} \leq C n^{-1/2} (1 + t) \ex{t^2/2}.
	\end{align*}

	{\medskip\noindent\it (iii). Proof of \cref{eq:p1.1-a}.}
    By the definition of $(S_n, S_n')$, we have 
    \begin{equ}
        \label{eq:p1.1-01}
        \conep{S_n - S_n'}{\mathcal{X}} 
        & = \frac{1}{n} \sum_{i = 1}^{n} \conep{X_i - X_i'}{\mathcal{X}}  = \bar{X} - \frac{1}{n} \sum_{i = 1}^{n} \frac{ \E_{\xi} \{\xi \ex{ \frac{\beta \xi^2}{2n} + \beta \bar{X}_i \xi }\}  }{  \E_{\xi} \{\ex{ \frac{\beta \xi^2}{2n} + \beta \bar{X}_i \xi }\}  }.
    \end{equ}
    Observe that 
    \begin{equ}
        \label{eq:p1.1-02} 
        \frac{ \E_{\xi} \{\xi \ex{ \frac{\beta \xi^2}{2n} + \beta \bar{X}_i \xi }\}  }{  \E_{\xi} \{\ex{ \frac{\beta \xi^2}{2n} + \beta \bar{X}_i \xi }\}  } = h(\bar{X}_i) + r_{1i}, 
    \end{equ}
    where 
    \begin{align*}
		h(s) & =  \frac{\E \{\xi \ex{ \beta s \xi }\} }{\E \ex{\beta s \xi}} , & 
        r_{1i} & = \frac{ \E_{\xi} \{\xi \ex{ \frac{\beta \xi^2}{2n} + \beta \bar{X}_i \xi }\}  }{  \E_{\xi} \{\ex{ \frac{\beta \xi^2}{2n} + \beta \bar{X}_i \xi }\}  } - \frac{ \E_{\xi} \{\xi \ex{  \beta \bar{X}_i \xi }\}  }{  \E_{\xi} \{\ex{   \beta \bar{X}_i \xi }\}  } . 
    \end{align*}
	Note that $n > 8 \{1,\beta / (1 - \beta)\}$, and thus $\beta/(2n) \leq 1/16$. Moreover, it is easy to see that $x^3 \leq 10 e^{x^2/8}$ for all $x > 0$. Therefore, 
	\begin{align*}
		\bigl\lvert \E_{\xi} \{ \xi e^{\frac{\beta \xi^2}{2n} + \beta \bar X_i \xi} \} - \E_{\xi} \{ \xi e^{\beta \bar X_i \xi} \} \bigr\rvert
		& \leq n^{-1}\E \bigl\{ \lvert \xi \rvert^3 e^{ \frac{\beta \xi^2}{2n} + \beta \bar X_i \xi } \bigr\}\\
		& \leq n^{-1} e^{\beta \bar X_i^2} \E \bigl\{ \lvert \xi \rvert^3 e^{ \xi^2/16 +  \xi^2/4 } \bigr\}\\
		& \leq C n^{-1} e^{\beta\bar X_i^2} \E e^{ 3 \xi^2/8 }\\
		& \leq C n^{-1} e^{\beta \bar X_i^2},
	\end{align*}
	where we used H\"older's inequality in the third line and 
	where we used \cref{lem:c5.1} in the last line. Similarly, 
	\begin{align*}
		\bigl\lvert \E_{\xi} \{ \xi e^{\frac{\beta \xi^2}{2n} + \beta \bar X_i \xi} \} - \E_{\xi} \{ \xi e^{\beta \bar X_i \xi} \} \bigr\rvert
		& \leq C n^{-1} e^{\beta \bar X_i^2}.
	\end{align*}
	As $\E \xi = 0$, it follows from the Jensen inequality that $\E_{\xi}e^{\beta s \xi} \geq 1$ for all $s \in \R$. Hence, 
    \begin{equ}
        |r_{1i}| \leq C n^{-1} \ex{\beta \bar{X}_i^2}.  
        \label{eq-p5.5-r1i}
    \end{equ}
    By Taylor's expansion, 
    \begin{equ}
        \label{eq:p1.1-03} 
        h(\bar{X}_i) & = \beta \bar{X} - \frac{\beta}{n} X_i + \int_{0}^{\bar{X}_i} h'' (t) (\bar{X}_i - t) \dd t.
    \end{equ}
    By \cite[][Eq. (5.41)]{Sh19B}, 
    \begin{equ}
        \label{eq:p1.1-07} 
        \biggl\lvert \int_{0}^{\bar{X}_i} h'' (t) (\bar{X}_i - t) \dd t \biggr\rvert \leq C |\bar{X}_i|^2 \ex{ \beta \bar{X}_i^2 }.
    \end{equ}
    It follows from \cref{eq:p1.1-02,eq-p5.5-r1i,eq:p1.1-03,eq:p1.1-07} that 
	\begin{equ}
		r_{2i} \coloneqq \biggl\lvert \frac{ \E_{\xi} \{\xi \ex{ \frac{\beta \xi^2}{2n} + \beta \bar{X}_i \xi }\}  }{  \E_{\xi} \{\ex{ \frac{\beta \xi^2}{2n} + \beta \bar{X}_i \xi }\}  } - \beta \bar X \biggr\rvert \leq \frac{\beta}{n} \lvert X_i \rvert + C \lvert \bar X_i
		\rvert^2 e^{\beta \bar X_i^2}. 
        \label{eq-p1.1-075}
	\end{equ}
	By \cref{eq:p1.1-01,eq-p1.1-075}, 
    \begin{equ}
        \label{eq:p1.1-08} 
        |R_1 | \leq \frac{C }{n} \sum_{i = 1}^{n} \biggl\{ \beta n^{-1}|X_i| + |\bar{X}_i|^2 \ex{ \beta \bar{X}_i^2 }  \biggr\}.
    \end{equ}
	Next we prove the bound of $\E \lvert R_1 \rvert \ex{t W}$. 
	Note that $\alpha_n(t) \leq 1$ for $0 \leq t \leq \sqrt{n}$, and by H\"older's inequality and by \cref{lem:c5.1},
	\begin{equ}
		\E e^{\beta \xi_1^2/2 + \alpha_n(t) \xi_1}
		& \leq e^{\alpha_n(t)^2/(1 - \beta)}\E e^{ \beta \xi_1^2/2 + (1 - \beta) \xi^2/4 } \leq C. 
        \label{eq-p1.1-085}
	\end{equ}
	Let $M_1 = S_n - \xi_1$. 
By \cref{lem:5.2} with $m = n - 1$ and by \cref{eq-p1.1-085}, 
    \begin{equ}
        \label{eq:p1.1-05} 
        \E \{|\bar{X}_i|^2 \ex{ \beta \bar{X}_i^2 + t W}\}
		& \leq \frac{1}{n^2 Z_n} \E \biggl\{M_1^2 \exp \biggl( \Bigl( \frac{\beta}{ 2n } + \frac{\beta}{n^2}  \Bigr) M_1^2 + \frac{\beta}{2}\xi_1^2 + \alpha_n(t) (\xi_1 + M_1) \biggr)\biggr\} \\
		& =  \frac{1}{n^2 Z_n} \E e^{\beta \xi_1^2 / 2 + \alpha_n(t) \xi_1}\E \biggl\{M_1^2 \exp \biggl( \Bigl( \frac{\beta}{ 2n } + \frac{\beta}{n^2}  \Bigr) M_1^2  + \alpha_n(t) M_1 \biggr)\biggr\}\\
		& \leq C n^{-1}(1 + t^2) \ex{t^2/2}.
    \end{equ}
    By \eqref{eq:c5.3-c}, 
    \begin{equ}
        \label{eq:p1.1-06} 
        \E \{|X_i| \ex{ t W }\} \leq C \ex{t^2/2}. 
    \end{equ}
    Combining \cref{eq:p1.1-05,eq:p1.1-06}, 
    we complete the proof of \cref{eq:p1.1-a}.

	{\medskip\noindent\it (iv). Proof of \cref{eq:p1.2-a}.}
    Observe that 
    \begin{equ}
        \label{eq:p1.2-01}
        \conepbc{ (S_n - S_n')^2 }{\mathcal{X}} = 2 + R_2 + R_3 + R_4, 
    \end{equ}
    where 
    \begin{align*}
        R_2 = \frac{1}{n} \sum_{i = 1}^{n} (X_i^2 - 1), \quad 
        R_{3} = - \frac{1}{n} \sum_{i = 1}^{n} \frac{ 2 X_i \E_{\xi} \{\xi \ex{ \frac{\beta \xi^2}{2n} + \beta \bar{X}_i \xi }\}  }{\E_{\xi} \{ \ex{ \frac{\beta \xi^2}{2n} + \beta \bar{X}_i \xi }\} }, \quad 
        R_4 = \frac{1}{n} \sum_{i = 1}^{n} \frac{ \E_{\xi} \{\xi^2 \ex{ \frac{\beta \xi^2}{2n} + \beta \bar{X}_i \xi }\}  }{\E_{\xi} \{\ex{ \frac{\beta \xi^2}{2n} + \beta \bar{X}_i \xi }\} } - 1. 
    \end{align*}
For $R_2$, applying  \cref{lem:5.7} with $k = 2$, and by \cref{eq:c5.3-b} and the Cauchy inequality  that 
\begin{equ}
    \label{eq:p1.2-02} 
    \E \{|R_2| \ex{t W}\} 
    & \leq \bigl( \E \ex{tW} \bigr)^{1/2} \bigl( \E \{|R_2|^2 \ex{t W}\} \bigr)^{1/2}  \leq C n^{-1/2} (1 + t) \ex{t^2/2}.
\end{equ}
For $R_3$, note that by \eqref{eq-p1.1-075}, 
\begin{align*}
|R_3| & \leq  2\beta \bar X^2 +  \frac{2}{n}\sum_{i = 1}^{n} |X_i|  r_{2i} \leq 2 \beta \bar X^2 + \frac{2}{n^2} \sum_{i = 1}^n X_i^2 + C n^{-1} \sum_{i = 1}^n \lvert X_i \bar X_i^2 \rvert e^{\beta \bar X_i^2}.
\end{align*}
Similar to \cref{eq:p1.1-05}, we have 
\begin{align}
		\E \{|X_i \bar{X}_i^2| \ex{ \beta \bar{X}_i^2 + t W}\}
			& \leq  \frac{1}{n^2 Z_n} \E \{|\xi_1|e^{\beta \xi_1^2 / 2 + \alpha_n(t) \xi_1} \}\E \biggl\{M_1^2 \exp \biggl( \Bigl( \frac{\beta}{ 2n } + \frac{\beta}{n^2}  \Bigr) M_1^2  + \alpha_n(t) M_1 \biggr)\biggr\} \nonumber\\
			& \leq C n^{-1}(1 + t^2) \ex{t^2/2}.
        \label{eq-p1.1-21}
\end{align}
By \cref{eq-p1.1-b0,eq:c5.3-c,eq-p1.1-21}, we obtain 
\begin{equ}
	\E |R_3| & \leq C n^{-1} (1 + t^2)\ex{t^2/2}.
    \label{eq:p1.2-03}
\end{equ}
For $R_4$, note that 
\begin{align*}
    \frac{ \E_{\xi} \{\xi^2 \ex{ \frac{\beta \xi^2}{2n} + \beta \bar{X}_i \xi }\}  }{\E_{\xi} \{\ex{ \frac{\beta \xi^2}{2n} + \beta \bar{X}_i \xi }\} } - 1  = \frac{ \E_{\xi} \{(\xi^2 - 1)  \ex{ \frac{\beta \xi^2}{2n} + \beta \bar{X}_i \xi }\}  }{\E_{\xi} \{ \ex{ \frac{\beta
	\xi^2}{2n} + \beta \bar{X}_i \xi }\} } = \frac{ \E_{\xi} \{(\xi^2 - 1)  \ex{  \beta \bar{X}_i \xi }\}  }{\E_{\xi} \{ \ex{  \beta \bar{X}_i \xi }\} } + r_{3i}, 
\end{align*}
where 
\begin{align*}
    r_{3i} = \frac{ \E_{\xi} \{(\xi^2 - 1) \ex{ \frac{\beta \xi^2}{2n} + \beta \bar{X}_i \xi }\}  }{  \E_{\xi} \{\ex{ \frac{\beta \xi^2}{2n} + \beta \bar{X}_i \xi }\}  } - \frac{ \E_{\xi} \{ (\xi^2 - 1) \ex{  \beta \bar{X}_i \xi }\}  }{  \E_{\xi} \{\ex{   \beta \bar{X}_i \xi }\}  } .
\end{align*}
Similar to \eqref{eq-p5.5-r1i}, we have 
\begin{math}
    |r_{3i}| \leq C n^{-1} e^{\beta \bar X_i^2}.
\end{math}
Applying \cref{eq:cw-c4} with $t = \pm 1$ implies $\E e^{|\xi|} \leq \E e^{ \xi } + \E e^{- \xi} \leq 2 e^{1/2}$, then 
\begin{math}
	\E \lvert \xi \rvert^3 \leq 1.4 \E e^{|\xi|} \leq 15.
\end{math}
Since $\E \xi = 0$, it follows that $ \E_{\xi} \{ \ex{  \beta \bar{X}_i \xi }\} \geq 1$ and
\begin{align*}
    \bigl\lvert \E_{\xi} \{(\xi^2 - 1)  \ex{  \beta \bar{X}_i \xi }\}\bigr\rvert 
    & \leq \bigl\lvert \E \{\xi^2 - 1  \} \bigr\rvert + \bigl\lvert \beta \bar{X}_i \E \{\xi (\xi^2 - 1)  \}  \bigr\rvert  + C \bar{X}_i^2  \E_{\xi} \{\lvert (\xi^2 - 1)\xi_i^2 \rvert   \ex{  \beta |\bar{X}_i \xi| }\} \\
	& \leq C \lvert \bar{X}_i \rvert + C \bar{X}_i^2 \ex{ \beta \bar{X}_i^2}.
\end{align*}
Therefore, 
\begin{align*}
	|R_4| \leq C n^{-1} \sum_{i = 1}^n \Bigl\{ \lvert \bar X_i \rvert + (n^{-1} + \bar X_i^2) \ex{\beta \bar X_i^2} \Bigr\}.
\end{align*}
By \cref{eq:p1.1-b,eq:c5.3-c}, we have 
\begin{align*}
	\E \bigl\{ \lvert \bar X_i \rvert e^{tW} \bigr\}
	& \leq \E \bigl\{ \lvert \bar X \rvert e^{tW} \bigr\} + \frac{1}{n}\E \{ \lvert X_i \rvert e^{tW} \} \leq C n^{-1/2}( 1 + t ) e^{t^2/2}.
\end{align*}
Similar to \eqref{eq:p1.1-05}, 
\begin{align*}
	\E  \bigl\{(n^{-1} + \bar X_i^2) \ex{\beta \bar X_i^2 + t W}\bigr\} 
	& \leq C n^{-1}(1 + t^2) \ex{t^2/2}. 
\end{align*}
Therefore,
\begin{equ}
    \label{eq:p1.2-05}
	\E \{|R_4| \ex{tW}\} \leq C n^{-1/2} (1 + t) \ex{t^2/2}.
\end{equ}
This completes the proof of \cref{eq:p1.2-a} by combining \crefrange{eq:p1.2-01}{eq:p1.2-05}.

{\medskip\noindent\it (v). Proof of \cref{eq:p1.2-b}.}
Observe that 
\begin{align*}
	\conepb{ (S_n - S_n')^3 }{\mathcal{X}}
	= R_5 + R_6 + R_7 + R_8, 
\end{align*}
where 
\begin{align*}
	R_5 & = \frac{1}{n} \sum_{i = 1}^{n} (X_i^3 - \E \xi^3), & 
	R_6 & = - \frac{1}{n} \sum_{i = 1}^{n} \frac{3 X_i^2 \E_{\xi} \{ \xi e^{ \frac{\beta \xi^2}{2n} + \beta \bar X_i \xi } \}}{\E_{\xi} \{ e^{ \frac{\beta \xi^2}{2n} + \beta \bar X_i \xi } \}}, \\
	R_7 & = \frac{1}{n} \sum_{i = 1}^{n} \frac{3 X_i \E_{\xi} \{ \xi^2 e^{ \frac{\beta \xi^2}{2n} + \beta \bar X_i \xi } \}}{\E_{\xi} \{ e^{ \frac{\beta \xi^2}{2n} + \beta \bar X_i \xi } \}}, & 
	R_8 & = - \frac{1}{n} \sum_{i = 1}^{n} \biggl(\frac{\E_{\xi} \{ \xi^3 e^{ \frac{\beta \xi^2}{2n} + \beta \bar X_i \xi } \}}{\E_{\xi} \{ e^{ \frac{\beta \xi^2}{2n} + \beta \bar X_i \xi } \}} - \E \xi^3\biggr). 
\end{align*}
Similar to \emph{(iv)}, the inequality \cref{eq:p1.2-b} holds.
\end{proof}

\section*{Acknowledgement}
The author would like to thank the associate editor and two referees for their valuable comments which led to substantial improvement in the presentation of this paper. 
The author would also like to thank Qi-Man Shao for his comments. 
Part of the paper was completed during the period of my visit at the 
Chinese University of Hong Kong. It was partially supported by Hong Kong Research Grants Council GRF 14304917.
This research was also partially supported by the Australian Research Council Centre of Excellence for Mathematical and Statistical Frontiers  CE140100049 and by Singapore Ministry of Education Academic Research Fund MOE
2018-T2-076.


\end{document}